\documentclass[12pt]{article}
\usepackage{MyStyleSheet}
\usepackage{hyperref}
\usepackage{tikz}
\usepackage{amsfonts}
\usepackage{braids}
\usepackage{blindtext}
\usepackage{geometry}
\usepackage{tikz}
\usepackage{caption}
\usepackage{theoremref}
\DeclareMathOperator{\gen}{\text{child}}
\title{Braids on the Stranded Cellular Automata Model}
\author{Alexa Renner\footnote{renneram@rose-hulman.edu; Department of Mathematics, Rose-Hulman Institute of Technology, 5500 Wabash Ave., Terre Haute, IN 47803, USA.}}
\date{September 2026}

\newtheorem{counter}{counter}[section]

\theoremstyle{definition}
\newtheorem{definition}[counter]{Definition}
\newtheorem*{definition*}{Definition}

\theoremstyle{plain}
\newtheorem{theorem}[counter]{Theorem}
\newtheorem{lemma}[counter]{Lemma}
\newtheorem{corollary}[counter]{Corollary}
\newtheorem{proposition}[counter]{Proposition}

\theoremstyle{remark}
\newtheorem{remark}[counter]{Remark}

\begin{document}
\maketitle
\newcommand{\generation}{\text{generation}}
\newcommand{\Triv}{\text{Triv}}
\newcommand{\CellWidth}{\text{CellWidth}}
\newcommand{\height}{\text{height}}
\newcommand{\pre}{\text{pre}}
\newcommand{\per}{\text{per}}
\newcommand{\post}{\text{post}}
\newcommand{\start}{\text{start}}
\newcommand{\send}{\text{end}}
\newcommand{\genTurn}{\text{gen}^\dag}
\newcommand{\ind}{\text{ind}}
\newcommand{\length}{\text{length}}
\newcommand{\ts}{\text{ts}}
\newcommand{\ct}{\text{c}}
\newcommand{\width}{\text{width}}
\newcommand{\word}{\text{word}}
\newcommand{\cs}{\text{cs}}
\newcommand{\cross}{\text{cross}}
\newcommand{\In}{\text{In}}
\begin{abstract}
    The Stranded Cellular Automata (SCA) model is a grid of cells such that each cell can contain 0, 1, or 2 strands, together with two cellular automata that control when and how strands turn and cross. It was developed to study patterns occurring in fiber arts. We define a notion of what it means for a braid, in the sense of an element of a braid group, to be represented by an SCA pattern, and provide several algorithms to determine when a braid has an SCA representation with certain additional properties.
\end{abstract}
\textbf{Keywords:} Cellular Automata, Stranded Cellular Automata, Decidability, Braid Groups

\hfill\break
\textbf{Mathematics Subject Classification:} Primary: 20F10, Secondary: 20F36, 68Q80
\section*{Introduction}
The Stranded Cellular Automata (SCA) model, developed in \cite{hh}, was originally designed to model fiber arts. It consists of a grid of cells, and each cell consists of some number of strands ranging from zero to two. Two cellular automata are used to determine how the strands in adjacent cells will interact with each other in the cell directly above those cells: A turning rule, which is used to determine whether, given certain configurations in a given generation, strands will turn; and a crossing rule, which is used to determine which strand, given certain configurations in a generation, will cross over the other. In \cite{gliders}, several of these intuitive notions were made rigorous.

One of the types of fiber art the SCA model has been used extensively to study is art made by crossing and knotting different strings. An example of this is \cite{loyd}, where friendship bracelet patterns were studied, and \cite{yang}, where weaving patterns were studied. The fact that many of these pieces of art use some notion of ``braiding" multiple strings together motivates the idea of representing braids on the SCA model. We develop a notion of what it means for a braid, in terms of an element of the braid group $B_n$, to have an SCA representation, and design algorithms to determine when braids have certain types of SCA representations.

Section \ref{braidgroups} provides a brief introduction to the braid groups, and section \ref{scaReps} provides some of the necessary definitions and notions surrounding braid representations and SCA patterns. In section \ref{fingridrep}, we present algorithms to determine if a pattern on the grid is an SCA representation of a specific braid, and when braids have certain types of SCA representatives, such as SCA representatives with finite height. Section \ref{compactReps} defines compact SCA representations and proves results about an algorithm for finding compact SCA representations of a braid. Finally, section \ref{outlook} provides a few avenues for future work.
\subsection*{Notation}
Throughout this report, we use the conventions and notation found in \cite{gliders}.
\begin{itemize}
    \item We use the convention $[k] = \{0,1,\cdots, k\}$.
    \item $G_n$ denotes the set of grid patterns on $n$ strands, $G_n^*$ denotes the set of grid patterns on $n$ strands that have only finitely many generations (these are called finite height grid patterns).
    \item $S_n$ denotes the set of SCA patterns on $n$ strands, $S_n^*$ denotes the set of SCA patterns on $n$ strands that have only finitely many generations.
    \item $\height:G_n^*\to\mathbb{N}$ is a function such that $\height(g)$ is the number of generations in $g$.
    \item Given lists of generations $g=[\delta_1,\cdots,\delta_n]$ and $h=[\delta_1',\cdots,\delta_m']$, we denote the list $[\delta_1,\cdots,\delta_n,\delta_1',\cdots,\delta_m']$ by $g\bullet h$, or $gh$ when there is no chance of confusion.
    \item We denote the empty list by $[]$.
    \item Given two words $w_1,w_2$, we denote the concatenation of $w_1$ and $w_2$ by $w_1*w_2$. When it is clear from context, we may write $w_1w_2$.
    \item We use the symbol ($\subset$) $\subseteq$ to denote (strict) subset, sublist, or substring. The appropriate meaning will be clear from context.
    \item Given a word $w\in W_n$, $[w]$ denotes the corresponding element of $B_n$.
\end{itemize}
We also give a quick reminder of some of the functions defined in \cite{gliders} that we will make use of:
\begin{itemize}
    \item $\gen(A, B)$: The child of two adjacent cells $A$ and $B$ in the same generation.
    \item $\length(g)$: The (possibly infinite) number of generations of a grid pattern $g$.
    \item $\width(\delta_i)$: The number of cells between the cell containing first strand and the cell containing the last strand in $\delta_i$, inclusive. 
\end{itemize}
We also use the notation $\frac{X}{YZ}$ to denote bits of turning rules corresponding to specific turning configurations, as in \cite{gliders}.
\section{Braid Groups}\label{braidgroups}
In this section, we provide a basic introduction to braid groups. 
\begin{definition}
    \cite{garside}: Let $n\in\mathbb{N}^+$. The set of strings on symbols $\sigma_1,\cdots,\sigma_{n-1},\sigma_1^{-1},\cdots,\sigma_{n-1}^{-1}$ is called $W_n$. We denote the empty string by 1. Given $w, v\in W_n$, we say $w\sim v$ if one can apply some finite sequence of the following relations to $w$ to get $v$:
    \begin{itemize}
        \item $\sigma_i\sigma_i^{-1} = 1$ (cancellation)
        \item $\sigma_i\sigma_j = \sigma_j\sigma_i$ if $|i-j|>1$ (far commutativity)
        \item $\sigma_i\sigma_{i+1}\sigma_i = \sigma_{i+1}\sigma_i\sigma_{i+1}$ (triple relation).
    \end{itemize}
    The quotient $W_n/\sim$ forms a group under the operation $[b]_\sim[c]_\sim = [bc]_\sim$, and this group is called the \textbf{braid group }$B_n$. In other words, $B_n$ is the group generated by the symbols $\sigma_1,\cdots,\sigma_{n-1},\sigma_1^{-1},\cdots,\sigma_{n-1}^{-1}\in W_n$ subject to the relations:
    \begin{itemize}
        \item $\sigma_i\sigma_i^{-1} = 1$ (cancellation)
        \item $\sigma_i\sigma_j = \sigma_j\sigma_i$ if $|i-j|>1$ (far commutativity)
        \item $\sigma_i\sigma_{i+1}\sigma_i = \sigma_{i+1}\sigma_i\sigma_{i+1}$ (triple relation)
    \end{itemize}
    for $1\leq i,j\leq n-1$.
\end{definition}
We will primarily study $B_n$ for $n\geq 3$, as $B_1 = 1$ and $B_2\cong\mathbb{Z}$. Elements of $W_n$ provide a convenient link between grid patterns and braids, so we will also study them. Each braid group (with the exception of $B_1$) is an infinite torsion-free group (see \cite{torsionfree}). As the name suggests, it can be helpful to think of elements of $B_n$ as literal braids. To do this, one can think of $\sigma_i$ as being a crossing, where the $i$th strand crosses over the $i+1$st strand, and $\sigma_i^{-1}$ being the crossing that would ``undo" $\sigma_i$, which would be the crossing where the $i+1$st strand crosses over the $i$th strand. We use this to visualize the triple relation:
\begin{center}
    \begin{tikzpicture}
    \braid a_1^{-1} a_2^{-1} a_1^{-1};
\end{tikzpicture} is equivalent to \begin{tikzpicture}
    \braid a_2^{-1} a_1^{-1} a_2^{-1};
\end{tikzpicture}
\end{center}
We will draw all of our braid diagrams going upwards. This way of visualizing braids, along with our visualization of $\sigma_i$ and $\sigma_i^{-1}$, is nonstandard but is more compatible with stranded cellular automata. 

We now consider an example in $B_4$. Notice that $\sigma_1\sigma_3\sigma_1^{-1}\sigma_2\sigma_3^{-1}\sigma_2^{-1}\sigma_1\sigma_2\sigma_1\in W_n$ is a representative of a braid in $B_4$. We claim that it represents the same braid as $\sigma_3\sigma_2\sigma_3^{-1}\sigma_1\sigma_2$. To see this, we may view the braid visually as:
\begin{center}
    \begin{tikzpicture}
    \braid a_1^{-1} a_2^{-1} a_1^{-1} a_2 a_3 a_2^{-1} a_1 a_3^{-1} a_1^{-1};
\end{tikzpicture}
\end{center}
Using far commutativity, we get:
\begin{center}
    \begin{tikzpicture}
    \braid a_1^{-1} a_2^{-1} a_1^{-1} a_2 a_3 a_2^{-1} a_3^{-1} a_1 a_1^{-1};
\end{tikzpicture}
\end{center}
Using cancellation, we get:
\begin{center}
    \begin{tikzpicture}
    \braid a_1^{-1} a_2^{-1} a_1^{-1} a_2 a_3 a_2^{-1} a_3^{-1};
\end{tikzpicture}
\end{center}
From there, we may use the triple relation to get:
\begin{center}
    \begin{tikzpicture}
    \braid a_2^{-1} a_1^{-1} a_2^{-1} a_2 a_3 a_2^{-1} a_3^{-1};
\end{tikzpicture}
\end{center}
We may use cancellation to get:
\begin{center}
    \begin{tikzpicture}
    \braid a_2^{-1} a_1^{-1} a_3 a_2^{-1} a_3^{-1};
\end{tikzpicture}
\end{center}
which is the diagram for $\sigma_3\sigma_2\sigma_3^{-1}\sigma_1\sigma_2$. Thus, the two words represent the same braid.

Notice that an element of $B_n$ can be written in multiple different ways using the generators. A common question concerning finitely presented groups is whether there exists an algorithm to tell if two representations (in our case words) of group elements (in our case, braids) represent the same element. For any $n\in\mathbb{N}^+$, the answer is yes for $B_n$: This is the main content of \cite{garside}. We will refer to this algorithm throughout the rest of this document as Garside's algorithm.
\section{SCA Representations}\label{scaReps}
We prove that $G_n$ is a partial monoid under the operation of concatenation. This will allow us to give a suitable definition of an SCA representation. 
\begin{definition}
    A \textbf{partial monoid} is a nonempty set $A$ with a partial function $\circ: A\times A\to A$ that satisfies the following:
    \begin{itemize}
        \item For $a, b, c\in A$, if $(a\circ b)\circ c$ and $a\circ(b\circ c)$ are defined, $(a\circ b)\circ c = a\circ(b\circ c)$.
        \item There exists a $1\in A$ such that, for all $a\in A$, $1\circ a$ and $a\circ 1$ are defined with $1\circ a = a\circ 1 = a$.
    \end{itemize}
\end{definition}
\begin{proposition}\thlabel{GridPartialMonoid}
    $G_n$ is a partial monoid under list composition.
\end{proposition}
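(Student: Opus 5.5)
The plan is to take the partial operation to be list concatenation $\bullet$ of generations, defined on a pair $(g,h)$ exactly when $g\bullet h$ is again a grid pattern on $n$ strands. Concretely, I expect this to mean: $g$ has finite height, and the strand configuration (positions of the $n$ strands) in the last generation of $g$ is compatible with that in the first generation of $h$. When $g$ has infinitely many generations, appending anything does not give a well-defined list indexed by $\mathbb{N}$, so $g\bullet h$ is left undefined unless $h=[]$, in which case we set $g\bullet[]=g$. First I would check that $\bullet$ really is a partial function $G_n\times G_n\to G_n$. Whenever it is defined, the result is a list of generations in which every pair of consecutive generations is either consecutive in $g$, consecutive in $h$, or is the junction pair. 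The junction pair is admissible by the definedness condition, and the other pairs are admissible because $g,h\in G_n$. The number of strands is preserved, since each piece has $n$ strands.

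Next comes associativity. Suppose $(a\bullet b)\bullet c$ and $a\bullet(b\bullet c)$ are both defined. As lists, both are the concatenation of the three underlying sequences of generations, and concatenation of (finite-then-possibly-infinite) sequences is associative. So the two results coincide as lists of generations, hence as grid patterns. One subtlety needs care: if some factor is infinite, the only way either side is defined is that every factor after the first infinite one is $[]$. In that case both sides again reduce to the same list, and I would handle this by a short case split on which of $a,b,c$ has infinite height.

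For the identity I take $1=[]$, the empty list. It must first be confirmed to lie in $G_n$, i.e.\ that the conventions of \cite{gliders} allow the empty grid pattern; if not, the natural substitute is the single trivial generation, with concatenation identifying the shared generation. I then check that $[]\bullet a$ and $a\bullet[]$ are always defined, since there is no junction condition to verify, and that both equal $a$.

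The main obstacle is pinning down precisely when concatenation is defined, so that the result is genuinely in $G_n$. I would try first to read the compatibility condition directly off the definition of a grid pattern in \cite{gliders}: the strand positions leaving the top of $g$ must match those entering the bottom of $h$. Once that condition is fixed, the remaining verifications are routine list manipulations.
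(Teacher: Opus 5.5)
Your proposal is correct and matches the paper's proof: concatenation is defined when the first factor has finite height or the second is $[]$, $[]$ is the identity, and associativity holds because both bracketings give the same list of generations. The junction-compatibility condition you anticipate is vacuous here, since grid patterns need not be continuous (the paper calls $g^<$ a ``not necessarily continuous grid pattern''). Your case split on infinite-height factors is also slightly more careful than the paper, which claims both sides being defined forces $a,b\in G_n^*$, overlooking cases such as $a$ of infinite height with $b=c=[]$.
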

\begin{proof}
    Let $a^*\in G_n, b\in G_n$, and let $a = [\delta_1,\cdots,\delta_k],b = [\delta_1',\cdots]$. We extend the definition of $\bullet: G_n^*\times G_n^*\to G_n$ to a partial function $\bullet:G_n\times G_n\to G_n$ by defining $a\bullet b = [\delta_1,\cdots,\delta_k,\delta_1',\cdots]$. Define $g\bullet[] = []\bullet g = g$ for all $g\in G_n$. Notice immediately that $\bullet$ considered as a partial function is well-defined. To show that $G_n$ is a partial monoid under $\bullet$, all that remains to show is that $\bullet$ is associative (when defined).

    Let $a, b,c\in G_n$, and suppose $(a\bullet b)\bullet c, a\bullet(b\bullet c)$ are both defined. In order for both to be defined, $a, b\in G_n^*$ and $c\in G_n$, so let $a = [\delta_1^{(1)},\delta_2^{(1)},\cdots,\delta_k^{(1)}], b = [\delta_1^{(2)},\delta_2^{(2)},\cdots,\delta_l^{(2)}],$ and $c = [\delta_1^{(3)},\delta_2^{(3)},\cdots]$. We may write:
    \begin{gather*}
        (a\bullet b)\bullet c = ([\delta_1^{(1)},\cdots,\delta_k^{(1)},\delta_1^{(2)},\cdots,\delta_l^{(2)}])\bullet[\delta_1^{(3)},\cdots] = [\delta_1^{(1)},\cdots,\delta_k^{(1)},\delta_1^{(2)},\cdots,\delta_l^{(2)},\delta_1^{(3)},\cdots]\\
        =[\delta_1^{(1)},\cdots,\delta_k^{(1)}]\bullet([\delta_1^{(2)},\cdots,\delta_l^{(2)},\delta_1^{(3)},\cdots]) = [\delta_1^{(1)},\cdots,\delta_k^{(1)}]\bullet([\delta_1^{(2)},\cdots,\delta_l^{(2)}]\bullet[\delta_1^{(3)},\cdots])\\
        = a\bullet(b\bullet c).
    \end{gather*}
    
\end{proof}
Notice that the proof of \thref{GridPartialMonoid} implies that $G_n^*$ is a monoid under list composition with identity element $[]$.

\begin{definition}
    Let $G, H$ be partial monoids. A \textbf{partial monoid homomorphism} $\varphi$ is a function $\varphi: H\to G$ such that, for $g, h\in H$ such that $gh\in H$, $\varphi(gh) = \varphi(g)\varphi(h)$ and $\varphi(g)\varphi(h)\in G$.
\end{definition}

We define:
\begin{align*}
    C = \{[s_j^{(t)}, s_{j+1}^{(t+1)}], [s_j^{(i)}, N_{j+1}], [N_j, s_{j+1}^{(i)}], [r_j^{(i)}, N_{j+1}], [N_j, l_{j+1}^{(i)}], [r_j^{(t)}, l_{j+1}^{(t+1)}], \overline{[r_j^{(t)}, l_{j+1}^{(t+1)}]}, [N_j, N_{j+1}]\\
    | j\in\mathbb{Z}, 1\leq i\leq n, 1\leq t\leq n - 1\}.
\end{align*}
Define $\nu: C\to W_n$ by
\begin{equation*}
\nu(x)=\begin{cases}
          \sigma_t \quad &\text{if} \, x =  [r_j^{(t)}, l_j^{(t+1)}] \text{ for some } 1\leq t\leq n - 1\\
        \sigma_t^{-1} \quad &\text{if} \, x =  \overline{[r_j^{(t)}, l_j^{(t+1)}]} \text{ for some } 1\leq t\leq n - 1\\
          1 \quad &\text{if} \, x \notin \{[r_j^{(t)}, l_j^{(t+1)}], \overline{[r_j^{(t)}, l_j^{(t+1)}]} : j\in\mathbb{Z}, 1\leq i\leq n, 1\leq t\leq n - 1\}\\
     \end{cases}
\end{equation*}

We define two functions, $\psi:G_n^*\to W_n$ and $\varphi: G_n^*\to B_n$, as follows:

Let $\iota: W_n\to B_n$ be the natural surjection.
\begin{align*}
    g = [\delta_1,\cdots,\delta_m]\\
    \delta_i = C_{i,0}\cdots, C_{i,w_i}\text{ for }i\in[m]\\
    \psi(g) = \nu(C_{1,0})\cdots\nu(C_{1,w_1})\cdots\nu(C_{m,0})\cdots\nu(C_{m,w_m})\\
    \varphi=\iota\circ\psi.
\end{align*}
We now show that $\varphi$ is a partial monoid homomorphism. Because $\iota$ is the natural inclusion of $G_n^*$ into $B_n$, it suffices to show that $\psi(a\bullet b) = \psi(a)\psi(b)$ for $a, b\in G_n^*$. Let $a = [\delta_1, \delta_2,\cdots, \delta_k]$ and $b = [\delta_1', \delta_2',\cdots,\delta_m']$ for some $m,k\in\mathbb{N}$. Now, suppose $\delta_i = C_{i,0}\cdots C_{i,w_i}$ for $i\in[k]$ and suppose $\delta_i' = C_{i,0}'\cdots C_{i,v_i}'$ for $i\in[m]$. We may write:
\begin{align*}
    \psi(a\bullet b) = \psi([\delta_1,\cdots,\delta_k,\delta_1',\cdots,\delta_m'])\\
    =\nu(C_{1,0})\cdots\nu(C_{1,w_1})\cdots\nu(C_{k,0})\cdots\nu(C_{k,w_k})\nu(C_{1,0}')\cdots\nu(C_{1,v_1})\cdots\nu(C_{m,0})\cdots\nu(C_{m,v_m})\\
    =\psi([\delta_1,\cdots,\delta_k])\psi([\delta_1',\cdots,\delta_m'])
    = \psi(a)\psi(b).
\end{align*}
 Then $\psi$ is alphabetic and $G_n^*, B_n$ are partial monoids, so $\varphi$ is an alphabetic partial monoid morphism. We will often abuse notation by writing $\psi(\delta)$ or $\varphi(\delta)$ instead of $\psi([\delta])$ or $\varphi([\delta])$ for a generation $\delta$. In a similar way, we will sometimes abuse notation by writing $\psi(C)$ or $\varphi(C)$ instead of $\psi([C])$ or $\varphi([C])$ for a cell $C$.
 \begin{proposition}\thlabel{PartialMonoidMorphism}
    $\varphi$ as defined above is an alphabetic partial monoid morphism.
\end{proposition}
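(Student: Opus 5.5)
The plan is to prove the statement in two parts. First, $\psi:G_n^*\to W_n$ is a monoid homomorphism that sends each cell to a word of length at most one. Second, composing with $\iota$ keeps the homomorphism property and the target is a monoid. Here \thref{GridPartialMonoid} gives that $G_n^*$ is a monoid under $\bullet$ with identity $[]$. So $\bullet$ is total on $G_n^*$ and the partial-monoid condition ``$gh\in H$'' always holds; in particular $\varphi(g)\varphi(h)$ is defined because $B_n$ is a group.

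For the first part I would argue as in the computation preceding the statement. Write $a=[\delta_1,\dots,\delta_k]$ and $b=[\delta_1',\dots,\delta_m']$, with each generation decomposed into its cells $C_{i,0}\cdots C_{i,w_i}$. Since $\psi$ is defined cell by cell as a concatenation of the letters $\nu(C_{i,j})$ read in generation order, the list $a\bullet b$ produces exactly the concatenation of the letters for $a$ followed by those for $b$. This gives $\psi(a\bullet b)=\psi(a)\psi(b)$. The identity is preserved because $\psi([])$ is the empty product, namely $1\in W_n$. Alphabeticity comes directly from the definition of $\nu$: every cell of $C$ is sent either to a single generator $\sigma_t^{\pm1}$ or to the empty word $1$. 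Hence $\psi$ is induced letterwise by $\nu$, which is exactly what an alphabetic morphism means here.

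For the second part, note that $\iota:W_n\to B_n$ is the natural surjection onto the quotient $W_n/\sim$. The product in $B_n$ is defined by $[b]_\sim[c]_\sim=[bc]_\sim$, so $\iota$ is a monoid homomorphism. Then
\[
\varphi(a\bullet b)=\iota(\psi(a)\psi(b))=\iota(\psi(a))\,\iota(\psi(b))=\varphi(a)\varphi(b),
\]
and $\varphi([])=[1]_\sim$ is the identity of $B_n$. Finally, each cell maps under $\varphi$ to the image of a single letter or to the identity, so $\varphi$ is also alphabetic.

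The main obstacle is bookkeeping rather than mathematics. One must check that $\psi$ is well defined, meaning the decomposition of a generation into cells from $C$ is unique and ordered left to right, so the concatenation of the letters $\nu(C_{i,j})$ is unambiguous. I would take that decomposition to be part of the definition of a generation, following \cite{gliders}. One must also make sure the indexing discrepancy in $\nu$ (writing $l_j^{(t+1)}$ rather than $l_{j+1}^{(t+1)}$) is harmless; I would read it as the crossing cell $[r_j^{(t)},l_{j+1}^{(t+1)}]$ listed in $C$.
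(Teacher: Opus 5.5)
Your proposal is correct and follows essentially the same route as the paper, which checks $\psi(a\bullet b)=\psi(a)\psi(b)$ by the same cell-by-cell concatenation, reads off alphabeticity from $\nu$, and then passes to $\varphi=\iota\circ\psi$. Your explicit argument that $\iota$ is a homomorphism (the paper loosely calls it an ``inclusion'') and your note on the index typo in $\nu$ are useful clarifications, not a different argument.
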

 The definitions of grid and SCA representations are as follows:
\begin{definition}
    Let $g\in G_n^*$. $g$ is a \textbf{grid representation} of $b$ if $\psi(g)\in b$.
\end{definition}
\begin{definition}
    Let $g\in G_n^*$. $g$ is an \textbf{SCA representation} of $b$ if $\varphi(g)\in b$ and $g\in S_n$.
\end{definition}
We need a lemma on SCA patterns:
\begin{lemma}\thlabel{trcrtransfer}
    Fix $j,k\in\mathbb{N}_{\geq 2}$. If $ab^jc$ is an SCA pattern, then $ab^kc$ is an SCA pattern. Additionally, $t$ is a turning rule (crossing rule) of $ab^jc$ if and only if $t$ is a turning rule (crossing rule) of $ab^kc$.
\end{lemma}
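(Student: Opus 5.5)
The plan is to reduce the statement to a local, generation-by-generation check. Recall from \cite{gliders} that a grid pattern is an SCA pattern exactly when there is a single turning rule and a single crossing rule consistent with every parent--child configuration $\frac{X}{YZ}$ occurring in the pattern. Each bit of the turning (crossing) rule is forced only by the configurations that actually appear. So the set of turning (crossing) rules compatible with a pattern $g$ is determined by the set $\mathcal{T}(g)$ of forced configuration--outcome pairs. The same holds for $\mathcal{C}(g)$ for crossings. The pattern is SCA if and only if these forced bits are mutually consistent, that is, no configuration is forced to two different outcomes. Both claims of \thref{trcrtransfer} will therefore follow once I show $\mathcal{T}(ab^jc)=\mathcal{T}(ab^kc)$ and $\mathcal{C}(ab^jc)=\mathcal{C}(ab^kc)$ for $j,k\geq 2$.

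The key step is to enumerate which consecutive pairs of generations occur in $ab^mc$ for $m\geq 2$. Every constraint arises from a pair $(\delta_i,\delta_{i+1})$ of consecutive generations: the children in $\delta_{i+1}$ of adjacent cells in $\delta_i$. In $ab^mc$ these pairs are of four kinds:
\begin{itemize}
\item pairs inside $a$, inside $c$, or inside a single copy of $b$;
\item the junction (last generation of $a$, first generation of $b$);
\item the junction (last generation of $b$, first generation of $b$), between consecutive copies of $b$;
\item the junction (last generation of $b$, first generation of $c$).
\end{itemize}
For every $m\geq 2$ each kind occurs at least once. In particular the $b$--$b$ junction occurs $m-1\geq 1$ times. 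No other kinds occur. Hence the set of consecutive generation pairs, and therefore the set of forced bits, is independent of $m\geq 2$. This gives $\mathcal{T}(ab^jc)=\mathcal{T}(ab^kc)$, and likewise for crossings.

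With this equality, consistency of the forced bits for $ab^jc$ is literally the same condition as for $ab^kc$. So if $ab^jc$ is SCA then so is $ab^kc$, and a rule $t$ extends the forced bits of one pattern exactly when it extends those of the other. This gives the ``if and only if'' for turning and for crossing rules.

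The main obstacle is that the SCA definition in \cite{gliders} may contain conditions beyond the purely local ones, such as positional alignment, a fixed strand count $n$, or the first generation being unconstrained. I would handle these by checking them individually:
\begin{itemize}
\item strand count and validity of each generation are properties of single generations, and every generation of $ab^kc$ is a copy of one appearing in $ab^jc$;
\item $ab^jc$ and $ab^kc$ have the same first generation (that of $a$, or of $b$ if $a=[]$), so the initial condition is unchanged.
\end{itemize}
The degenerate cases $a=[]$ or $c=[]$ simply remove a junction kind from both patterns simultaneously, so the argument is unchanged. The hypothesis $j,k\geq 2$ is used exactly to guarantee that the $b$--$b$ junction is present in both patterns.
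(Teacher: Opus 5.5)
Your proposal is correct and follows essentially the paper's argument. The paper likewise observes that every parent--child configuration of $ab^kc$ already occurs in one of the sublists $ab$, $b^2$, or $bc$ of $ab^jc$, using $j\geq 2$ to get $b^2$, so the same bits are forced. Your formulation as equality of the sets of consecutive generation pairs gives both directions at once, whereas the paper argues by contradiction and handles the converse with ``a similar argument''.
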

\begin{proof}
    For contradiction, suppose there exists an SCA pattern $ab^jc$ with a turning rule $l$ and a crossing rule $v$, and $k\in\mathbb{N}_{\geq2}$ such that $ab^kc$ is not an SCA pattern under $l$ and $v$. In the following, we abuse notation by including the empty cell adjacent to $C_{t,0}$ and the empty cell adjacent to the cell containing the last strand of generation $t$. There exist cells $C_{t, j}, C_{t, j+1}$ in some generation $t$ of $ab^kc,t<\length(ab^kc)$, such that $\gen(C_{t, j}, C_{t, j+1})$ produces a bit that contradicts $v$ or $l$. Notice that at least one of the following must hold:
\begin{enumerate}
    \item There exist $C_{r, i}, C_{r, i+1}$ in $ab$ such that $\gen(C_{r, i}, C_{r, i+1}) = \gen(C_{t,j}, C_{t, j+1})$ and $C_{r, i} = C_{t,j}, C_{r,i+1} = C_{t,j+1}$.
    \item There exist $C_{r, i}, C_{r, i+1}$ in $b^2$ such that $\gen(C_{r, i}, C_{r, i+1}) = \gen(C_{t,j}, C_{t, j+1})$ and $C_{r, i} = C_{t,j}, C_{r,i+1} = C_{t,j+1}$.
    \item There exist $C_{r, i}, C_{r, i+1}$ in $bc$ such that $\gen(C_{r, i}, C_{r, i+1}) = \gen(C_{t,j}, C_{t, j+1})$ and $C_{r, i} = C_{t,j}, C_{r,i+1} = C_{t,j+1}$.
\end{enumerate}
We proceed by cases:

    \hfill\break
    \textbf{Cases 1 and 3:} Notice that the bits $C_{r, i}, C_{r,i+1}$ and $\gen(C_{r, i}, C_{r,i+1})$ contribute to $l$ and $v$ are the same bits that are generated by  $C_{t, j}, C_{t,j+1}$ and $\gen(C_{t, j}, C_{t,j+1})$ in $ab^kc$, a contradiction.

    \hfill\break
    \textbf{Case 2:} Notice that the bits $C_{r, i}, C_{r,i+1}$ and $\gen(C_{r, i}, C_{r,i+1})$ contribute to $l$ and $v$ are the same bits that are generated by  $C_{t, j}, C_{t,j+1}$ and $\gen(C_{t, j}, C_{t,j+1})$ because $k\geq 2$, a contradiction.

Thus, having reached contradictions in all cases, we may conclude that the turning rules and crossing rules of $ab^jc$ are a subset of the turning rules and crossing rules of $ab^kc$. A similar argument shows that $ab^jc$ is an SCA pattern under any turning rule and crossing rule of $ab^kc$.     
\end{proof}
\begin{remark}\thlabel{rem}
    A similar proof to that of \thref{trcrtransfer} shows that all sublists of SCA patterns are SCA patterns.
\end{remark}
\begin{lemma}\thlabel{torsionFree}
    Let $a, d, c$ be $n$-stranded grid patterns such that $\varphi(d)\neq1$, and suppose that $b\in W_n$ is a representative of a braid in $B_n$. Then, $ad^kc$ is in the same equivalence class as $b$ in $B_n$ for at most one value of $k$.
\end{lemma}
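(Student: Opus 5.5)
The argument combines the homomorphism property of $\varphi$ from \thref{PartialMonoidMorphism} with the fact, recalled in Section \ref{braidgroups}, that $B_n$ is torsion-free. Here $a,d,c$ are finite height patterns, so every concatenation $ad^kc$ is defined and lies in $G_n^*$, and "in the same equivalence class as $b$" is read as $\varphi(ad^kc)=[b]$. We argue by contradiction: suppose there are two distinct nonnegative integers $k_1<k_2$ with $\varphi(ad^{k_1}c)=\varphi(ad^{k_2}c)=[b]$.

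\textbf{Step 1: factor the images.} Apply \thref{PartialMonoidMorphism} repeatedly (induction on $k$, using that $G_n^*$ is a monoid under $\bullet$). This gives
\[
\varphi(ad^kc)=\varphi(a)\varphi(d)^k\varphi(c)\quad\text{in }B_n .
\]
Hence
\[
\varphi(a)\varphi(d)^{k_1}\varphi(c)=\varphi(a)\varphi(d)^{k_2}\varphi(c).
\]

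\textbf{Step 2: cancel.} Since $B_n$ is a group, multiply on the left by $\varphi(a)^{-1}$ and on the right by $\varphi(c)^{-1}$. This gives $\varphi(d)^{k_1}=\varphi(d)^{k_2}$, so $\varphi(d)^{k_2-k_1}=1$ with $k_2-k_1\geq 1$.

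\textbf{Step 3: invoke torsion-freeness.} $B_n$ is torsion-free for $n\geq 2$ (see \cite{torsionfree}), so $\varphi(d)=1$, contradicting the hypothesis. The case $n=1$ is vacuous: $B_1=1$ forces $\varphi(d)=1$, so the hypothesis $\varphi(d)\neq 1$ cannot hold. Therefore at most one $k$ works.

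The only point needing care is Step 1. The multiplicativity of $\varphi$ was stated for pairs of finite patterns, and it must be extended to the $(k+2)$-fold product $ad^kc$. Because everything lies in the genuine monoid $G_n^*$, associativity of $\bullet$ together with a straightforward induction on $k$ handles this, including the case $k=0$ where $d^0=[]$.
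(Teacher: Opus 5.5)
Your proof is correct and follows the same route as the paper: factor $\varphi(ad^kc)=\varphi(a)\varphi(d)^k\varphi(c)$ using the homomorphism property, cancel $\varphi(a)$ and $\varphi(c)$ in the group $B_n$, and use torsion-freeness to conclude $\varphi(d)=1$, a contradiction. The paper restricts to $k,r\in\mathbb{N}^+$ and leaves the multi-fold factorization implicit, so your treatment of $k=0$, $n=1$, and the induction is a harmless extra check on the same argument.
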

\begin{proof}
    Suppose $\varphi(ad^kc)$ and $\varphi(ad^rc)$ are equivalent to $b$ for some $k,r\in\mathbb{N}^+, k\neq r$. Without loss of generality, suppose $k>r$. Then, $[\varphi(d)^k] = [\varphi(a)^{-1}b\varphi(c)^{-1}] = [\varphi(d)^r]$. Thus, $[\varphi(d)]^{k-r}=[\varphi(d)^{k-r}] = [1]$, so $[\varphi(d)] = [1]$ as $B_n$ is torsion-free.
\end{proof}
\section{Finite Grid Representations}\label{fingridrep}
Throughout this section we denote the map defined in section \ref{scaReps} sending grid patterns to words by $\psi$ and the map defined in section \ref{scaReps} sending grid patterns to braids by $\varphi$.
\subsection{Decidability of Finite Grid SCA Representations}
Recall that $G_n^*$ denotes the set of grid patterns on $n$ strands with finite height, and $S_n^*$ denotes the set of SCA patterns on $n$ strands with finite height. We claim that the set $S = \{\langle g, b\rangle: g\in S_n, \varphi(g)\in [b]\}$ is decidable in the universe $\{\langle g, b\rangle: g\in G_n^*, b\in B_n\}$. To show this, we design a few algorithms to decide this problem. 

We first introduce two algorithms to determine whether an element of $G_n^*$ is continuous, in the sense of \cite[Subsection 2.2]{gliders}; and to determine whether or not an element of $G_n^*$ that is continuous has a crossing rule, and if it does, what the generic crossing rule of $G_n^*$ is. The notation used matches that used in \cite[Turning Rule Algorithm]{gliders}.

\hfill\break
\textbf{Algorithm 1:} (continuity algorithm):

\hfill\break
Let $g$ be a finite grid pattern with height $h$. This algorithm will return a true or false value if $g$ is continuous or if $g$ is not continuous, respectively.

Define a function $\ct$ such that:
\begin{itemize}
    \item $\ct(x_i)$ is $F$ if $x_i$ is $[r_k^{(s)}, l_{k+1}^{(s+1)}]$, $[l_k^{(s)}, r_{k+1}^{(s+1)}],$ or $[s_{k}^{(s)}, s_{k+1}^{(s+1)}]$.
    \item $\ct(x_i)$ is $L$ if $x_i$ is $[n_k^{(\varnothing)}, l_{k+1}^{(s+1)}]$.
    \item $\ct(x_i)$ is $S_1$ if $x_i$ is $[s_{k}^{(s)}, n_{k+1}^{(\varnothing)}]$.
    \item $\ct(x_i)$ is $R$ if $x_i$ is $[r_k^{(s)}, n_{k+1}^{(\varnothing)}]$.
    \item $\ct(x_i)$ is $S_2$ if $x_i$ is $[n_{k}^{(\varnothing)},s_{k+1}^{(s)}]$.
    \item $\ct(x_i)$ is $N$ if $x_i = [n_{k}^{(\varnothing)},n_{k+1}^{(\varnothing)}]$.
\end{itemize}
for some $k$ and $s$. We will use $c$ to determine if there are any subpatterns of $g$ which are not continuous. To do so, given a cell $x$, $c$ returns the data of how many strands the cell has ($F$ in the case that the cell is ``full" with two strands, $L, S_1, S_2, $ or $R$ in the case where the cell contains exactly one strand, and $N$ if the cell contains no strands) and, if the cell contains only one strand, if and how that strand turns and where the strand is in the cell. In the following, each set $C_i$ corresponds to a different configuration of cells. As an abuse of notation, we will assume that each generation of $g,\delta_j,$ is written as $\delta_j = C_{j,-1}C_{j,0}\cdots C_{j,n_j}C_{j,n_j+1}$ where $C_{j,0}$ contains the first strand of $\delta_j, C_{j,n_j}$ contains the last strand of $\delta_j$, and the cells $C_{j,-1}$ and $C_{j,n_j+1}$ are empty. Then, for all pairs of adjacent cells $x_i, x_{i+1}$ in the same generation $\delta_j$ for $j\in[h-1]$, we may write:
\begin{align*}
    C_0 = \{\ct(\gen(x_i, x_{i+1})):\ct(x_i)\in\{ F,R, S_2\}, \ct(x_{i+1}) \in\{F, L, S_1\}\}\\
    C_1 =  \{\ct(\gen(x_i, x_{i+1})):\ct(x_i) \in\{F, R,S_2\}, \ct(x_{i+1}) \in\{N, R,S_2\}\}\\
    C_2 =  \{\ct(\gen(x_i, x_{i+1})):\ct(x_i) \in\{N, L,S_1\}, \ct(x_{i+1}) \in\{F, L,S_1\}\}\\
    C_3 =  \{\ct(\gen(x_i, x_{i+1})):\ct(x_i) \in\{N,S_1,L\}, \ct(x_{i+1}) \in\{N, S_2, R\}\}.
\end{align*}
From here, we need to determine if $g$ contains any discontinuities. We define the following variables $s_0,s_1,s_2,s_3$:
\begin{enumerate}
    \item If $C_0\subseteq\{F\}$, let $s_0 = 0$. Otherwise, let $s_0 = 1$.
    \item If $C_1\subseteq\{R, S_1\}$, let $s_1 = 0$. Otherwise, let $s_1 = 1$.
    \item If $C_2\subseteq\{L,S_2\},$ let $s_2 = 0$. Otherwise, let $s_2 = 1$.
    \item If $C_3\subseteq\{N\},$ let $s_3 = 0,$ otherwise let $s_3 = 1$.
\end{enumerate}
If $s_i = 0$ for $0\leq i\leq 3$, return true, otherwise return false. Notice that by the definition given in \cite{gliders}, $g$ is continuous if and only if $s_i=0$ for $0\leq i\leq 3$.

\hfill\break
\textbf{Algorithm 2:} (crossing rule algorithm):

\hfill\break
Let $g$ be a continuous finite grid pattern. This algorithm will return a generic crossing rule if $g$ has a crossing rule and the empty set otherwise. 

Define a function $\cs$ on the set of continuous grid pattern such that:
\begin{itemize}
    \item $\cs(x_i)$ is $T_1$ if $x_i = [r_k^{(s)}, l_{k+1}^{(s+1)}]$
    \item $\cs(x_i)$ is $T_2$ if $x_i = \overline{[r_k^{(s)}, l_{k+1}^{(s+1)}]}$
    \item $\cs(x_i)$ is $L$ if $x_i = [n_k^{(\varnothing)}, l_{k+1}^{(s+1)}]$ or $[s_k^{(s)},n_{k+1}^{(\varnothing)}]$.
    \item $\cs(x_i)$ is $R$ if $x_i = [r_k^{(s)}, n_{k+1}^{(\varnothing)}]$ or $[n_{k}^{(\varnothing)},s_{k+1}^{(s)}]$.
    \item $\cs(x_i)$ is $S$ if $x_i = [s_k^{(s)},s_{k+1}^{(s+1)}]$.
\end{itemize}
for some $k$ and $s$. Intuitively, $\cs$ extracts the information necessary from each pair of cells and their child to determine the bit in the crossing rule that set of cells contributes. In the following, each set $B_i$ corresponds to a bit in the crossing rule. We may write: 
\begin{align*}
    B_0 = \{\cs(\gen(x_i, x_{i+1})):\cs(x_i) = T_2, \cs(x_{i+1}) = T_1\}\\
    B_1 =  \{\cs(\gen(x_i, x_{i+1})):\cs(x_i) = T_2, \cs(x_{i+1}) \in\{ L, S\}\}\\
    B_2 =  \{\cs(\gen(x_i, x_{i+1})):\cs(x_i) = T_2, \cs(x_{i+1}) = T_2\}\\
    B_3 =  \{\cs(\gen(x_i, x_{i+1})):\cs(x_i) \in\{R,S\}, \cs(x_{i+1}) = T_1\}\\
    B_4 =  \{\cs(\gen(x_i, x_{i+1})):\cs(x_i) \in\{ R,S\}, \cs(x_{i+1})\in\{ L,S\}\}\\
    B_5 =  \{\cs(\gen(x_i, x_{i+1})):\cs(x_i) \in\{R,S\}, \cs(x_{i+1}) = T_2\}\\
    B_6 =  \{\cs(\gen(x_i, x_{i+1})):\cs(x_i) = T_1, \cs(x_{i+1}) = T_1\}\\
    B_7 =  \{\cs(\gen(x_i, x_{i+1})):\cs(x_i) = T_1, \cs(x_{i+1}) \in\{ L,S\}\}\\
    B_8 =  \{\cs(\gen(x_i, x_{i+1})):\cs(x_i) = T_1, \cs(x_{i+1}) = T_2\}.
\end{align*}
From here, we must determine all possible crossing rules of $g$, if any exist. Note that, by the assumed continuity of $g$, $B_i\subseteq\{T_1,T_2\}$ for $0\leq i\leq 8$. If $B_i$ contains $T_1$ and $T_2$ for any $i\in\{0,1,\cdots,8\},$ then no crossing rule can exist. If this is the case, return $\varnothing$. However, notice that if $g$ has a crossing rule and $B_i$ does not contain $T_1$ or $T_2$, then there exist crossing rules of $g$ with a 1 in position $i$ and other crossing rules of $g$ with a 0 in position $i$. Thus, in the algorithm, first determine if there exists an $i\in \{0,1,\cdots,8\}$ with $T_1,T_2\in B_i$. If so, then $g$ does not have a crossing rule. If not, for each $i\in \{0,1,\cdots,8\}$, define:
\begin{itemize}
    \item $s_i = 0$ if $T_2\in B_i, T_2\not\in B_i$.
    \item $s_i = 1$ if $T_1\in B_i, T_2\not\in B_i$
    \item $s_i = X$ if $T_1,T_2\not\in B_i$.
\end{itemize}
Let $s = s_0s_1s_2s_3s_4s_5s_6s_7s_8$. Return $s$. By definition, the generic crossing rule of $g$ is $s$. 
\begin{proposition}\thlabel{sn}
    $S_n^*$ is decidable in $G_n^*$.
\end{proposition}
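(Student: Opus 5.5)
Given $g\in G_n^*$ of height $h$, we decide membership in $S_n^*$ by running the two algorithms above in sequence and then checking for a turning rule with the turning rule algorithm of \cite{gliders}. Recall from \cite{gliders} that a finite grid pattern is an SCA pattern exactly when it is continuous and admits both a turning rule and a crossing rule that are consistent with every parent--child configuration in $g$. Decidability therefore reduces to showing that each of these three properties is checkable by a terminating procedure with a correct yes/no answer.

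The first step is to run Algorithm 1 on $g$. Since $g$ has finitely many generations and each generation has finite width, there are only finitely many adjacent pairs $x_i,x_{i+1}$ in generations $\delta_j$ with $j\in[h-1]$, padded by the empty cells $C_{j,-1},C_{j,n_j+1}$. Hence the sets $C_0,\dots,C_3$ are finite and computable, and the algorithm terminates. Its correctness is the remark following it: $g$ is continuous if and only if all $s_i=0$. If it returns false, $g\notin S_n^*$, since SCA patterns are continuous.

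If $g$ is continuous, the second step is to run Algorithm 2. Again the sets $B_0,\dots,B_8$ are finite and computable, and continuity guarantees $B_i\subseteq\{T_1,T_2\}$. We must argue that $g$ has a crossing rule if and only if no $B_i$ contains both $T_1$ and $T_2$. Each $B_i$ records exactly the outputs forced on bit $i$ of the crossing rule by configurations occurring in $g$. A conflict therefore rules out every crossing rule. Conversely, if there is no conflict, any completion of the generic rule $s$, with each $X$ replaced by $0$ or $1$, is a valid crossing rule. If the algorithm returns $\varnothing$, reject.

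The third step is to apply the turning rule algorithm of \cite{gliders}, which analogously collects, for each turning configuration $\frac{X}{YZ}$, the set of observed children and reports a conflict or a generic turning rule. The same finiteness argument shows it terminates. Accept iff it succeeds. Then $g\in S_n^*$ iff all three checks pass, and the combined procedure always halts, proving the proposition.

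The main obstacle is the correctness direction for Algorithms 2 and 3: showing that the absence of pairwise conflicts in each bit suffices, which requires that the bits of the rules are independent. This holds because each configuration determines exactly one bit, so any choice for the unconstrained bits is consistent. A smaller point is that the turning and crossing rules must be checked jointly rather than separately. I expect this is harmless, since the crossing rule only governs full cells with two turning strands, while turning is governed independently. I would verify this against the definition in \cite{gliders}.
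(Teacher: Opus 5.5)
Your proposal is correct and matches the paper's proof: run Algorithm 1 for continuity, then Algorithm 2 and the turning rule algorithm from the earlier SCA paper, and accept exactly when all three succeed. The points you add are left implicit in the paper: termination because $g$ is finite, bitwise independence of the rules, and the fact that separately existing turning and crossing rules can be used together because they are determined by disjoint data.
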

\begin{proof}
    Let $g\in G_n^*$. Notice that $g\in S_n^*$ if and only if algorithms 1, 2, and \cite[Turning Rule Algorithm]{gliders} accept $g$, so run algorithm 1 on $g$. If it rejects $g$, reject; and if it accepts $g$, run algorithms 2 and \cite[Turning Rule Algorithm]{gliders} on $g$. If either one rejects, reject; otherwise accept.
\end{proof}
Let $M$ be the algorithm to decide whether a grid pattern is an SCA pattern given in \thref{sn}. We now describe an algorithm to determine if, given a grid pattern and a braid, the grid pattern is an SCA representation of the braid.

\hfill\break
\textbf{Algorithm 3} (finite grid pattern algorithm): 

\hfill\break
Given a grid pattern $g\in G_n^*$ and a braid representative $w\in W_n$ of a braid $B_n$:
\begin{itemize}
    \item[1.] Run $M$ on $g$. If $M$ rejects, reject.
    \item[2.] Compute $\psi(g)$.
    \item[3.] Run Garside's algorithm on $\psi(g)$ and $w$ to determine if $\psi(g)$ and $w$ represent the same braid. If it accepts, accept; otherwise reject.
\end{itemize}
\begin{theorem}
    The problem of whether $g\in G_n^*$ is an SCA representation for a braid in $B_n$ represented by word $w$ is decidable.
\end{theorem}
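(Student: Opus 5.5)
The plan is to show that Algorithm 3 is a decider for this problem. That means checking two things: it halts on every input $\langle g,w\rangle$ with $g\in G_n^*$ and $w\in W_n$, and it accepts exactly when $g$ is an SCA representation of $[w]$. By definition, $g$ is an SCA representation of $[w]$ precisely when two conditions hold. The first is $g\in S_n$; since $g$ has finite height, this is the same as $g\in S_n^*$. The second is $\varphi(g)=[w]$, which is equivalent to $\psi(g)\sim w$ because $\varphi=\iota\circ\psi$. The proof therefore splits into these two conjuncts, and I would handle each in the order Algorithm 3 checks them.

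\textbf{Step 1.} By \thref{sn}, the machine $M$ decides $S_n^*$ within $G_n^*$. Step 1 of Algorithm 3 therefore halts on every input, and it rejects exactly when $g\notin S_n^*$, which is exactly when $g$ is not an SCA pattern. The only thing to confirm is that Algorithms 1 and 2 and the Turning Rule Algorithm of \cite{gliders} halt on finite input. They do: each inspects finitely many adjacent cell pairs, namely at most $h$ generations, each of finite width plus two padding cells, and then compares finitely many finite sets.

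\textbf{Step 2.} Computing $\psi(g)$ is a finite procedure. Each generation $\delta_i$ consists of finitely many relevant cells $C_{i,0},\dots,C_{i,w_i}$, and $\nu$ is given by an explicit finite case check on each cell. So $\psi(g)$ is a concrete word in $W_n$, formed by concatenating $\nu(C_{i,j})$ over the generations in order.

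\textbf{Step 3.} Garside's algorithm decides the word problem in $B_n$. It therefore halts and accepts exactly when $[\psi(g)]=[w]$, that is, when $\varphi(g)=[w]$.

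Combining the steps: the algorithm accepts if and only if $g\in S_n^*$ and $\varphi(g)=[w]$, which is exactly the definition of an SCA representation. It also halts on all inputs, since each of its three steps halts.

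I expect Step 1 to be the main obstacle, in the sense that it carries the real content. The argument relies on the claim in \thref{sn} that membership in $S_n^*$ is equivalent to acceptance by the continuity, crossing-rule and turning-rule algorithms. Since that proposition is stated earlier in the paper, I will invoke it rather than reprove it. The remaining points are routine bookkeeping: that $\psi$ is computable, and that the answer does not depend on which representative $w$ of the braid is given, which is automatic because Garside's algorithm compares equivalence classes.
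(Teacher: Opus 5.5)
Your proposal is correct and follows the paper's argument: decide membership in $S_n^*$ via \thref{sn}, compute $\psi(g)$, and compare it with $w$ using Garside's algorithm, which is Algorithm 3. The only difference is that the paper runs the $S_n^*$ decider once before calling Algorithm 3, which is redundant because Algorithm 3 already does this in its first step; your halting remarks just spell out what the paper takes for granted.
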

\begin{proof}
    By \thref{sn}, there exists a decider for the problem of whether $g\in S_n^*$. Run that decider on $g$. If it rejects, reject. If it accepts, apply algorithm 3 to $\langle g, w\rangle$. If it accepts, accept; otherwise, reject.
\end{proof}
\subsection{Decidability of the Existence of Fixed Height SCA Representations}\label{height}
In this section, we design an algorithm to determine if, given a $h\in\mathbb{N}^+$ and a braid representative $w$, there exists an SCA pattern of height $h$ that is an SCA representation of $[w]$. We first need some new definitions:

Given a grid pattern $g$, we may order the cells of $g$ using the lexicographical ordering, that is, $C_{i,j}<C_{a, b}$ if $i<a$ or if $i=a$ and $j<b$. Notice this is a well ordering. Given a grid pattern $g$, we may assign a new (not necessarily continuous) grid pattern $g^<$ to $g$ defined as $[x_1,x_2,\cdots]$ such that $x_i<x_{i+1}$ for all $i$, each $x_i$ is a cell of $g$, and all cells of $g$ are included in $g^<$. Given $g^{<} = [x_1,\cdots, x_u]$, we define the initial segment of $g^{<}$ prior to $x_t$ by $g^{<x_t}$.

As an example, consider the grid pattern $g = [\delta_1,\delta_2,\delta_3]$ with $\delta_1 = \overline{[r_1^{(1)},l_1^{(1)}]},[r_3^{(3)},l_4^{(4)}][s_5^{(5)},n_6^{(\varnothing)}]$, $\delta_2 = [n_0^{(\varnothing)},s_1^{(1)}][r_2^{(2)},l_3^{(3)}][s_4^{(4)},s_5^{(5)}]$, $\delta_3 = [r_1^{(1)},l_2^{(2)}]\overline{[r_3^{(3)},l_4^{(4)}]}[s_5^{(5)},n_6^{(\varnothing)}]$ below:
\begin{center}
    \includegraphics[scale=0.7]{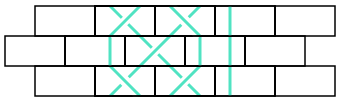}
\end{center}
Then, 
\begin{gather}
    g^{<C_{2,3}} = [\overline{[r_1^{(1)},l_1^{(1)}]},[r_3^{(3)},l_4^{(4)}],[s_5^{(5)},n_6^{(\varnothing)}],[n_0^{(\varnothing)},s_1^{(1)}],[r_2^{(2)},l_3^{(3)}]].
\end{gather}
\begin{definition}
    Let $g$ be a grid pattern, and let $C_{i,j}$ be a cell in some generation $\delta_i$ with $i>1$. Then, the \textbf{parents} of $C_{i,j}$ is the pair $(C_{i-1,k},C_{i-1,k+1})$ for the $k$ such that $\gen(C_{i-1,k},C_{i-1,k+1}) = C_{i,j}$.
\end{definition}
We now introduce some new notation. Let $g\in G_n$ be a grid pattern. We denote the (possibly empty) set of turning rules of $g$ by $T_g$ and the (possibly empty) set of crossing rules of $g$ by $C_g$.
\begin{definition}
    Let $N\in\mathbb{N}^+$. Let $g_1,\cdots, g_k$ be finite SCA patterns of the same height such that, for each $i\in[k], g_i\in S_{n_i}^*$ and $g_i = [\delta_{1,i},\cdots,\delta_{N,i}]$. Further suppose that $T=\bigcap_{i\in[k]} T_{g_i}\neq\varnothing$ and $C=\bigcap_{i\in[k]} C_{g_i}\neq\varnothing$. Let $u$ be the width $N$ generation of empty cells. Let $t\in T,c\in C$. Then, the \textbf{product} $\prod_{i\in[k]}^{c,t}g_i$ with respect to $c$ and $t$ is the length $N$ SCA pattern generated by $\delta_1 = [\delta_{1,1},u,\delta_{1,2},u,\cdots,\delta_{1,k}]$ under $t$ and $c$.
\end{definition}
\begin{proposition}\thlabel{independentofChoice}
    Let $c_1,c_2\in C, t_1,t_2\in T$. Then, $\prod_{i\in[k]}^{c_1,t_1}g_i = \prod_{i\in[k]}^{c_2,t_2}g_i$.
\end{proposition}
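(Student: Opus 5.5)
We argue by induction on the generation index, showing that the two patterns $P_1=\prod_{i\in[k]}^{c_1,t_1}g_i$ and $P_2=\prod_{i\in[k]}^{c_2,t_2}g_i$ agree generation by generation. Both are generated from the same first generation $\delta_1=[\delta_{1,1},u,\delta_{1,2},u,\cdots,\delta_{1,k}]$, so the base case is immediate. The real claim is stronger and is what we carry through the induction: for every $m\le N$, the $m$th generation of the pattern generated under $(c,t)$, for any $c\in C$ and $t\in T$, is exactly $[\delta_{m,1},u,\delta_{m,2},u,\cdots,\delta_{m,k}]$. Once this holds, both products equal the concatenation of the $g_i$'s generations separated by the empty buffers, so $P_1=P_2$.

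For the inductive step, assume generation $m<N$ has this form. We then compute each child of a pair of adjacent cells in generation $m$, splitting the pairs into three cases.

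First, suppose both cells lie in the block $\delta_{m,i}$, including the abused-notation empty cells flanking its first and last strands. In $g_i$ that pair produced a child in $\delta_{m+1,i}$. That configuration's turning bit and crossing bit are bits that $g_i$ determines. Since $t\in T\subseteq T_{g_i}$ and $c\in C\subseteq C_{g_i}$, the rules $t$ and $c$ agree with $g_i$ on those bits, so the child is the same cell of $\delta_{m+1,i}$.

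Second, suppose both cells are empty and lie in a buffer $u$. Since $\frac{N}{NN}$-type configurations must produce empty cells by continuity, their child is empty regardless of $t$ and $c$. This case uses only the fact that every SCA rule sends two empty cells to an empty cell.

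Third, suppose one cell is at the boundary of a block and the other is in a buffer. The configuration is then (last cell of $\delta_{m,i}$, empty) or (empty, first cell of $\delta_{m,i+1}$). In $g_i$ these exact configurations already occur, via the flanking empty cells $C_{j,-1}$ and $C_{j,n_j+1}$, so this case reduces to the first.

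The main obstacle is bookkeeping with the widths. We must check that the buffers $u$ have width $N$, so that a block's spread can never reach the next block within $N$ generations. Each generation can widen the occupied region by at most one cell on each side, and $\width$ changes by at most one per generation. Over fewer than $N$ steps, each buffer therefore keeps at least one empty cell between blocks in every generation, since it shrinks by at most two per step from each side combined. If that estimate is too tight, we would instead argue that a block of $g_i$ stays a block of $g_i$, whose generations are given. Its occupied region is thus bounded by the actual $\delta_{m,i}$, and the buffer only needs to absorb the staggered alignment of children, which shifts by half a cell per generation.

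We also need to note that children depend only on the parents and the rule bits, which is the same observation used in Case 2 of \thref{trcrtransfer}. Finally, we need to observe that the indexing of cells within a generation shifts consistently, so that the positions of the blocks are independent of $(c,t)$.
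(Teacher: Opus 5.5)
Your strategy is essentially the paper's. The paper's minimal-counterexample argument over cells in the lexicographic order is the same induction over generations, and it rests on the same two facts. First, every pair of parents is either two empty cells or lies inside a single block together with its flanking empty cells. Second, $t\in T\subseteq T_{g_i}$ and $c\in C\subseteq C_{g_i}$ then force the child. Your strengthened hypothesis, that the product restricted to block $i$ reproduces $\delta_{m,i}$, is what actually justifies that second step, because the rules are only pinned down on configurations that occur in $g_i$. The paper leaves this implicit here and proves it only in the following proposition. However, two steps, as you have written them, are wrong.

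\textbf{The inductive hypothesis is false.} You claim generation $m$ is exactly $[\delta_{m,1},u,\delta_{m,2},u,\ldots,\delta_{m,k}]$ with $u$ the width-$N$ empty generation. This fails for $m\ge 2$. The children of the pairs (last cell of $\delta_{m,i}$, adjacent empty cell) and (adjacent empty cell, first cell of $\delta_{m,i+1}$) can both contain strands, for instance if both boundary cells are crossings. In that case the gap is already $N-1$ in generation $2$, so the number of empty cells between consecutive blocks varies with $m$; your own last paragraph concedes that the buffers shrink. Carry instead the following: for every $(c,t)\in C\times T$, generation $m$ is the same list of cells, consisting of $\delta_{m,1},\ldots,\delta_{m,k}$ in order, separated by runs of at least $N-m+1$ empty cells. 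Your three cases then show that each child in generation $m+1$ is independent of $(c,t)$. That gives equality of the two products and makes your closing remark about indexing automatic.

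\textbf{The separation estimate does not close.} If a buffer could lose a cell on each side per generation, then after $N-1$ generations it could have $N-2(N-1)=2-N\le 0$ cells left. The correct count is your fallback, and it should be the argument. Let $e_1,\ldots,e_G$ with $G\ge 1$ be the empty cells between the last cell of $\delta_{m,i}$ and the first cell of $\delta_{m,i+1}$. The $G+1$ pairs $(\mathrm{last},e_1),(e_1,e_2),\ldots,(e_G,\mathrm{first})$ have consecutive children in generation $m+1$. All of them except the two outer ones come from two empty cells and are therefore empty, so the gap drops by at most one per generation. This yields at least $N-m+1\ge 2$ empty cells in every generation $m\le N-1$ whose children are computed. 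That is precisely the separation fact the paper states at the start of its proof, and it guarantees that no parent pair ever straddles two blocks.
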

\begin{proof}
    Let $g$ be the product with respect to $c,t$ for any $c\in C, t\in T$. Consider generation $\delta_j$ of $g,j\leq N$. Let $C_{j,i},C_{j,t}$ be two cells belonging to two patterns $g_r, g_m$ in the product, $m\neq r$ Then, by type A and type B generations and the choice of $j,$ there are at least $N - j + 1$ cells between $C_{j,i}$ and $C_{j,t}$.

    Let $c_1,c_2\in C, t_1,t_2\in T$. Let $g=\prod_{i\in[k]}^{c_1,t_1}g_i$ and let $g'=\prod_{i\in[k]}^{c_2,t_2}g_i$. Refer to the $j$th cell in the $i$th generation of $g$ as $C_{i,j}$ and the $j$th cell in the $i$th generation of $g'$ as $C_{i,j}'$. For contradiction, suppose there exists some cell $C_{i,j}\neq C_{i,j}'$. Notice that $i \neq 1$ as $C_{1,j} = C_{1,j}'$ for all $j$ by construction, so we may assume $i>1$. Choose the minimal (under the ordering described in \thref{orderingdef}) such cell, $C_{i,j}$. Then, because $i>1$, there exist cells $C_{i-1,t},C_{i-1,t+1}$ in generation $i-1$ of $g$ such that $\gen(C_{i-1,t},C_{i-1,t+1}) = C_{i,j}$. 

    By minimality, $C_{i-1,t} = C_{i-1,t}'$ and $C_{i-1,t+1} = C_{i-1,t+1}'$. Notice that $C_{i-1,t}$ or $C_{i-1,t+1}$ must be nonempty, or else $C_{i,j},C_{i,j}'$ would both be empty. If $C_{i-1,t}$ is nonempty, it must belong to $g_r$ for some $r\in[k]$. Then, by the fact stated at the beginning of the proof, $C_{i-1,t+1}$ must belong to $g_r$ or must be empty (if $C_{i-1,t}$ in $g_r$ is the last cell containing a strand). A similar argument shows that if $C_{i-1,t+1}$ is nonempty and belongs to $g_r, C_{i-1,t}$ belongs to $g_r$ or is empty (if $C_{i-1,t}$ in $g_r$ is the first cell containing a strand). Thus, under any crossing and turning rules of $g_r, \gen(C_{i-1,t},C_{i-1,t+1}) = C_{i,j}$. Because $t_1,t_2\in T\subseteq T_{g_r}, c_1,c_2\in C\subseteq C_{g_r},$ we have $C_{i,j} = \gen(C_{i-1,t},C_{i-1,t+1}) =\gen(C_{i-1,t}',C_{i-1,t+1}') = C_{i,j}'$, a contradiction.
\end{proof}
In light of \thref{independentofChoice}, given $g_1,\cdots, g_k$ finite SCA patterns of the same height such that $\bigcap_{i\in[k]} T_{g_i}\neq\varnothing$ and $\bigcap_{i\in[k]} C_{g_i}\neq\varnothing$, we may refer to the product $\prod_{i\in[k]}g_i$ of $g_1,\cdots,g_k$ without reference to a specific turning or crossing rule.

\begin{proposition}
    Let $g_1,\cdots, g_k$ be finite SCA patterns of the same height such that $\bigcap_{i\in[k]} T_{g_i}\neq\varnothing$ and $\bigcap_{i\in[k]} C_{g_i}\neq\varnothing$. Then, for all $i\in[k], g_i$ is a subpattern of $\prod_{i\in[k]}g_i$.
\end{proposition}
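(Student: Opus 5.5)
The plan is to show, by induction on the generation index $j\in\{1,\dots,N\}$, that generation $j$ of $g=\prod_{i\in[k]}g_i$ is obtained by placing the generations $\delta_{j,1},\dots,\delta_{j,k}$ side by side, each at a fixed horizontal offset, with only empty cells between consecutive blocks. Fix $t\in T$ and $c\in C$; by \thref{independentofChoice} we may compute $g$ under these rules. For each $r\in[k]$ let $o_r$ be the offset of the block $\delta_{1,r}$ in $\delta_1=[\delta_{1,1},u,\delta_{1,2},u,\cdots,\delta_{1,k}]$, after aligning $C_{1,0}$ of $g_r$ with its position there.

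The induction hypothesis at stage $j$ has two parts. First, the cells of generation $j$ of $g$ lying in the window of $g_r$ coincide with the cells of $\delta_{j,r}$ under the shift by $o_r$ (adjusted by the half-cell shift between generations). Second, between two consecutive windows there are at least $N-j+1$ empty cells. The base case $j=1$ is immediate from the definition of $\delta_1$, since each gap is the width-$N$ empty generation $u$.

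For the inductive step, take a pair of adjacent cells $(C_{j,s},C_{j,s+1})$ in generation $j$ of $g$. There are three cases.
\begin{enumerate}
\item If both cells are empty, their child is empty, as it is in any SCA pattern.
\item If at least one cell is nonempty and lies in the window of $g_r$, then by the gap bound (which is at least $N-j+1\geq 1$ for $j\le N$) the other cell lies in the same window or is an empty boundary cell. This is exactly the configuration that occurs at the corresponding position of $g_r$, including the empty padding cells the paper adjoins at each end of a generation. Since $t\in T_{g_r}$ and $c\in C_{g_r}$, and $g_r$ is an SCA pattern under $t,c$, the child equals the corresponding cell of $\delta_{j+1,r}$.
\item For the gap bound, note that a block widens by at most one cell per generation on each side, since a child's strands come from its parents. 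So a gap shrinks by at most one per generation, giving at least $N-j$ empty cells at stage $j+1$.
\end{enumerate}

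From the induction, for each $r$ the cells of $g$ in the window of $g_r$ across generations $1,\dots,N$ reproduce $g_r$ exactly. Hence $g_r$ is a subpattern of $g$, in the sense of \cite{gliders}, via this embedding.

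The main obstacle is the bookkeeping in the gap argument. One must pin down precisely how width grows per generation and how the half-offset between generations interacts with counting empty cells. One must also make sure that a pair straddling the boundary, with one nonempty cell of $g_r$ and one empty cell, really matches the padded boundary pair in $g_r$. This is the same fact used informally at the start of the proof of \thref{independentofChoice}, and I would make it rigorous there first.
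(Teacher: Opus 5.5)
Your proposal is correct and takes essentially the same approach as the paper. Both argue by induction on the generation index. Both use the shrinking-gap bound (at least $N-j$ empty cells between blocks) to show that every parent pair of a cell in $g_r$'s window also occurs in $g_r$, padding included, and then invoke $t\in T_{g_r}$, $c\in C_{g_r}$ to match children; you fold the gap bound into the induction hypothesis and justify it, which the paper only asserts.
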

\begin{proof}
   Let $i\in[k]$. Suppose $g_i$ contains strands $s,s+1,\cdots, t$ in $\prod_{i\in[k]}g_i$. Let $g_i = [\delta_1',\cdots,\delta_N']$. For each $\delta_j$ of $g=\prod_{i\in[k]}g_i$, call the subpattern of $\delta_j$ containing strands $s,s+1,\cdots,t$ $p_i$. We claim that $p_j = \delta_j'$ for all $j\in[N]$. 

    We proceed by induction on $j$. The base case is satisfied by definition.

    \hfill\break
    \textbf{Inductive Step:} Suppose $p_j = \delta_j'$ for some $j<N$. Then, notice that strand $s-1$ (if it exists) is separated from strand $s$ by at least $N - j$ empty cells in $\delta_{j+1}$ and strand $t$ is separated from strand $t+1$ (if it exists) by at least $N-j$ empty cells in $\delta_{j+1}$. Now, let $C_{j+1,r}$ be a cell in $p_{j+1}$. Notice that there exist cells $C_{j,w}, C_{j,w+1}$ in $\delta_j$ such that $\gen(C_{j,w},C_{j,w+1}) = C_{j+1,r}$ and such that one of the following is satisfied:
    \begin{enumerate}
        \item $C_{j,w+1}$ and $C_{j,w}$ are nonempty and each contain a strand in $p_j$.
        \item $C_{j,w+1}$ is empty but $C_{j,w}$ contains a strand that is not $t$ or $s$.
        \item $C_{j,w}$ is empty but $C_{j,w+1}$ contains a strand that is not $t$ or $s$.
        \item $C_{j,w+1},C_{j,w}$ are empty.
        \item $C_{j,w}$ contains strand $t$.
        \item $C_{j,w+1}$ contains strand $s$.
    \end{enumerate}
    Notice that by the inductive hypothesis, $p_j = \delta_j'$. Also notice that strand $s-1$ (if it exists) in $\delta_j$ is separated from strand $s$ by at least $N-j+1$ empty cells and strand $t+1$ (if it exists) in $\delta_j$ is separated from strand $t$ by at least $N-j+1$ empty cells.
    
    Then, in cases (1)-(3), $C_{j,w},C_{j,w+1}$ are both in $\delta_j'$. In case (4), $\gen(C_{j,w+1},C_{j,w})$ must be empty by continuity of $g$ and $g'$. In case (5), $C_{j,w}$ is in $\delta_j'$ and $C_{j,w+1}$ is empty, and in case (6), $C_{j,w+1}$ is in $\delta_j'$ and $C_{j,w}$ is empty. Call the cell in $g_r$ corresponding to $C_{j,w}$ $A$ and the cell in $g_r$ corresponding to $C_{j,w+1}$ $B$. Notice that in all cases, the cell $\gen(A,B)$ is identical to the cell $\gen(C_{j,w},C_{j,w+1})$, as $T\subseteq T_{g_r}$ and $C\subseteq C_{g_r}$. Thus, $p_{j+1} = \delta_{j+1}'$, and the inductive hypothesis is confirmed.
\end{proof}
\begin{definition}
    Let $g$ be a finite continuous $n$-stranded grid pattern for some $n\in\mathbb{N}^+$. Then, there exist $h_1,\cdots,h_r$ such that for each $i\in[r]$, and each generation $\delta_{j}$ of $h_{i}, \width(
    \delta_j) = \width(h_i)$, such that $\prod(h_1,\cdots,h_r) = g$. Let $(h_1,\cdots,h_r)$ be a maximal such tuple, and let $(g_1,\cdots,g_k)$ be the subtuple containing only the elements of $(h_1,\cdots,h_r)$ which contain a strand. The tuple $(g_1,\cdots,g_k)$ is called the \textbf{block decomposition} of $g$, and each $g_i$ for $i\in[k]$ is called a \textbf{block}. If the block decomposition of $g$ is $(g)$, we call $g$ a \textbf{block}. 
\end{definition}
\begin{lemma}\thlabel{scablockdecomp}
    Suppose $g$ has block decomposition $(g_1,\cdots,g_r)$. Then, for any $h_0,\cdots,h_{r}$ such that for all $1\leq i\leq r$, $h_i$ is a pattern of empty cells of constant width or $[]$; $\psi\left(\prod (g_1,\cdots, g_r)\right) = \psi\left(\prod (h_0,g_1,h_1,\cdots, g_r,h_r)\right)$ and $g$ is an SCA pattern if and only if $\prod (h_0,g_1,h_1,\cdots, g_r,h_r)$ is.
\end{lemma}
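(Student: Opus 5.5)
Since $g$ has block decomposition $(g_1,\cdots,g_r)$, the definition gives $g=\prod(g_1,\cdots,g_r)$ (after discarding the strandless factors), so all SCA turning and crossing rules of $g$ lie in $\bigcap_i T_{g_i}$ and $\bigcap_i C_{g_i}$. I will compare $g$ with $g' := \prod(h_0,g_1,h_1,\cdots,g_r,h_r)$ generation by generation. The product is generated from its first generation, and inserting $h_i$ only adds empty cells between consecutive blocks (or on the outside). So the first generation of $g'$ is that of $g$ with extra runs of empty cells. My plan is to show, by induction on the generation index $j$, that for each $j$ the $j$th generation of $g'$ is the $j$th generation of $g$ with some runs of empty cells inserted between (and around) the subgenerations coming from distinct blocks. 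In both patterns, the subpattern on the strands of $g_i$ is exactly $g_i$; this follows from the subpattern proposition just proved, applied to both products.

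For the word equality, I will use that $\psi$ reads off a generation cell by cell, left to right, and sends every cell other than a crossing cell to $1$. By the inductive description, the nontrivial cells in generation $j$ of $g$ and of $g'$ are the same crossing cells, in the same left-to-right order: first those of $g_1$, then $g_2$, and so on. The inserted empty cells $[N_k,N_{k+1}]$ contribute $1$. Strand indices are global strand labels, which are unchanged because no strands are added. Hence $\psi(g)=\psi(g')$, which is $\psi$ applied generation by generation.

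For the SCA equivalence, every configuration of two adjacent parent cells together with their child that occurs in $g'$ falls into one of the following cases: both cells lie inside a single block (these also occur in $g$); one cell is a boundary cell of a block and the other is empty (this also occurs in $g$, since blocks in $g$ are separated by at least one empty cell, by the width argument in the proof of \thref{independentofChoice}); or both cells are empty with an empty child. The converse inclusion holds by the same reasoning. Therefore the sets of turning-rule bits and crossing-rule bits contributed by $g$ and by $g'$ coincide, and the same argument gives continuity. So, via Algorithms 1 and 2 and the Turning Rule Algorithm, as in \thref{sn}, $g$ is an SCA pattern if and only if $g'$ is. This mirrors the case argument of \thref{trcrtransfer}.

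The main obstacle is the separation claim: I need that the blocks stay separated by empty cells in every generation of both patterns. Otherwise, in $g$ two blocks could be adjacent and produce a mixed configuration that does not appear in $g'$, or the reverse. I would handle this using the maximality in the definition of the block decomposition. Each $g_i$ has constant width across its generations, so its strands never drift outward. If two consecutive blocks ever had adjacent nonempty cells, then either a child cell would straddle them, contradicting that each block's generations are exactly its own subpattern, or they could be merged, contradicting maximality. The case $h_i=[]$ needs a separate check; there the factor is simply absent and the product still inserts its separating generation $u$.
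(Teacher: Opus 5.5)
Your proposal is correct and follows essentially the same route as the paper: $\psi$ ignores the inserted empty cells, and the SCA equivalence rests on the same three-way case analysis of adjacent parent pairs (both in one block, a block's boundary cell next to an empty cell, or both empty), with the converse handled symmetrically. The separation of blocks that you flag as the main obstacle (which the paper uses tacitly) comes from padding with the width-$N$ empty generation $u$ in the definition of the product, by the width argument you already cite from \thref{independentofChoice}; your appeal to maximality is not needed and points the wrong way, since maximality rules out further splitting, not merging.
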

\begin{proof}
First, notice that any empty grid pattern has all possible turning and crossing rules. For each $0\leq i\leq r$, denote the $j$th generation of $h_i$ by $\delta_{i, j}'$. To see the first claim, notice that:
\begin{gather*}
    \psi\left(\prod (g_1,\cdots, g_r)\right) = \psi(\delta_{1,1})\psi(h)\cdots\psi(h)\psi(\delta_{r,\height(g_r)})\\
=\psi(\delta_{0,1}')\psi(h)\psi(\delta_{1,1})\psi(h)\cdots\psi(h)\psi(\delta_{r,\height(g)})\psi(h)\psi(\delta_{r,\height(g)'}) =\psi\left(\prod (h_0,g_1,h_1,\cdots, g_r,h_r)\right).
\end{gather*}

Now, suppose $g$ is an SCA pattern. First, note that $\prod (h_0,g_1,h_1,\cdots, g_r,h_r)$ is continuous and has the same height as $g$. For contradiction, suppose $\prod (h_0,g_1,h_1,\cdots, g_r,h_r)$ is not an SCA pattern. Then, there exist cells $C_{i, j}, C_{i, j+1}, i<\height(g)$ in some generation of $\prod (h_0,g_1,h_1,\cdots, g_r,h_r)$ such that the bit generated in $\gen(C_{i, j}, C_{i, j+1})$ conflicts with all crossing and turning rules of $g$. Note that $C_{i, j}, C_{i,j+1}$ cannot be empty cells, or else $\prod (h_0,g_1,h_1,\cdots, g_r,h_r)$ would not be continuous. Also notice that $C_{i,j}, C_{i,j+1}$ cannot be neighboring cells in some $g_u,u\in[r]$, or else the bit they generate would appear in all crossing and turning rules of $g$. Thus, we have two cases:

\hfill\break
\textbf{Case 1:} $C_{i,j}$ is in $g_u$ for some $u\in[r]$ and $C_{i,j+1}$ is empty. Notice then that these two cells also appear next to each other in generation $i$ of $g$, so the bit they generate appears in all crossing and turning rules of $g$, a contradiction.

\hfill\break
\textbf{Case 2:} $C_{i,j+1}$ is empty and $C_{i,j+1}$ is in $g_u$ for some $u\in[r]$. This case follows almost identically to case 1.

Thus, if $g$ is an SCA pattern, then $\prod (h_0,g_1,h_1,\cdots, g_r,h_r)$ is an SCA pattern. The proof of the converse is almost identical.
\end{proof}




\begin{theorem}
    Let $h, n\in\mathbb{N}^+$, and let $w\in W_n$ be a representative of $b\in B_n$. The problem of whether there exists a height $h$ SCA representation $g\in S_n$ of $b$ is decidable.
\end{theorem}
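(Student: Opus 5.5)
The plan is to reduce the question to a search over a finite set of candidate patterns, and then run Algorithm 3 on each candidate. The set of height $h$ grid patterns on $n$ strands is infinite only because of horizontal translation and because empty columns can be inserted between blocks. Both kinds of freedom are removed using the block decomposition and \thref{scablockdecomp}.

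\textbf{Step 1: normalizing the block decomposition.} Let $g$ be any height $h$ SCA representation of $b$, with block decomposition $(g_1,\cdots,g_r)$. By \thref{scablockdecomp}, the pattern $g' = \prod(g_1,\cdots,g_r)$, with no padding between blocks, satisfies two things. First, $\psi(g') = \psi(g)$, so $g'$ still represents $b$. Second, $g'$ is an SCA pattern exactly when $g$ is. In particular, whenever a height $h$ SCA representation exists, one of the following also exists:
\begin{itemize}
    \item a product of blocks with only the minimal spacing forced by the definition of the product, and
    \item with its first strand placed in a fixed column, say column $0$, since patterns differing by a translation have identical turning and crossing rules and identical images under $\psi$.
\end{itemize}

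\textbf{Step 2: bounding the width of each block.} Each block contains at least one strand, so $r\le n$ and the blocks contain $n$ strands in total. In a single generation, every nonempty cell contains one or two strands. The main obstacle is to show that the width of every generation inside a block is bounded by a computable function of $n$ and $h$.

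\begin{itemize}
    \item \emph{Within one generation.} By maximality of the block decomposition, I expect that a block cannot contain a run of empty cells that separates its strands. Such a run would either split the block into two products, or would persist for the full height by continuity. The intended bound is that a generation of a block has at most $2n$ cells between its extreme strands, allowing one possible gap of width one between consecutive occupied cells.
    \item \emph{Across the height.} The first generation of a block determines the width of all later generations. Each strand moves at most one column per generation, so the width changes by at most a bounded amount per step. This gives a crude bound of the form $\width \le 2n + 2h$.
\end{itemize}

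If maximality turns out not to rule out internal gaps cleanly, the fallback is to argue directly. Since all widths are constant within a block and strands shift by at most one column per generation, any gap wider than $2h$ can be shrunk to $2h$ without changing any child cell. This uses the same locality argument as \thref{independentofChoice}. It therefore also preserves $\psi$ and the set of rules, by the argument of \thref{scablockdecomp}.

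\textbf{Step 3: the decision procedure.} After Steps 1 and 2, only finitely many candidate first generations remain: each cell of a bounded-width window is chosen from the finite alphabet of cell types in $C$. Whether a height $h$ pattern is an SCA pattern depends on a turning rule and a crossing rule, and there are only finitely many of each. It is therefore enough to enumerate the finitely many height $h$ grid patterns of bounded width with first strand in column $0$, which is a superset of the normalized SCA patterns. For each such $g$, we run Algorithm 3 on $\langle g, w\rangle$. That is, we run $M$ from \thref{sn} and then apply Garside's algorithm to $\psi(g)$ and $w$. We accept if any candidate is accepted and reject otherwise.

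Correctness in one direction is immediate, since any accepted candidate is a genuine height $h$ SCA representation of $b$. In the other direction, Steps 1 and 2 show that if any height $h$ SCA representation exists, then a normalized one lies in the enumerated finite set. The procedure terminates because the enumeration is finite and each run of Algorithm 3 halts.
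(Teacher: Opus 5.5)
Your proposal is correct and takes essentially the paper's route: use the block decomposition and \thref{scablockdecomp} to remove the freedom in spacing, reduce to finitely many candidate patterns, and run $M$ and Garside's algorithm on each. The paper enumerates tuples of single-block patterns and forms their products rather than enumerating bounded-width patterns directly, and it only asserts finiteness ``by the definition of a block.''

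Of your two finiteness arguments, rely on the locality/gap-shrinking one, because the first bullet of Step 2 is false as stated. The product inserts exactly $h$ empty cells between consecutive factors. So, for example, two straight strands separated by fewer than $h$ empty cells cannot be split into separate factors. A block can therefore contain internal gaps whose length grows with $h$, and no bound like $2n$ cells holds.
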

\begin{proof}
    Let $M$ be the machine that decides whether or not a pattern is continuous (\thref{sn}). Given a representative $w\in W_n$ of a braid in $b\in B_n$ and a $h\in\mathbb{N}$, we may run the following algorithm:
\begin{enumerate}
    \item If $h = 0$, accept.
    \item For $1\leq k\leq n$, enumerate all continuous patterns of at most height $h$ on $k$ strands that have one block.
    \item Enumerate all possible ordered tuples $(g_{1,1},\cdots,g_{1,r}),\cdots,(g_{t,1},\cdots,g_{t,r})$ of patterns listed in (1) such that the sum of the number of strands in each pattern is $n$.
    \item For each $i\in[t]$:
    \begin{enumerate}
        \item Determine $\height(g_{i,j})$ for each $j\in[r]$. If any two heights differ, continue with $i + 1$ if $i<t$ (if $i=t$, reject).
        \item For each $j\in[r]$:
        \begin{enumerate}
            \item Run $M$ on $g_{i,j}$. If it accepts, form the set $T_{g_{i,j}}$ of turning rules of $g_{i,j}$ and the set $C_{g_{i,j}}$ of crossing rules of $g_{i,j}$. If it rejects, continue with $i + 1$ if $i<t$ (if $i=t$, reject).
        \end{enumerate}
        \item Determine whether one of $\bigcap_{j\in[r]}T_{g_{i,j}}$ or $\bigcap_{j\in[r]}T_{g_{i,j}}$ is empty. If one is, continue with $i + 1$ if $i<t$ (if $i=t$, reject). 
        \item Compute $\prod_{j\in [r]}g_{i,j}$.
        \item Compute $\psi\left(\prod_{j\in [r]}g_{i,j}\right)$.
        \item Run Garside's algorithm on $\psi\left(\prod_{j\in [r]}g_{i,j}\right)$ and $w$. If it accepts, accept; otherwise if $i<t$, continue with $i+1$. If $i = t$, reject.
    \end{enumerate}
\end{enumerate}
We now prove that the algorithm works as desired. Notice that in step (2), there are only finitely many such patterns by the definition of a block, so the algorithm halts on all inputs. Suppose $g$ is an SCA representative for $[w]=b$. We may write $g$ in block form as $(g_1,\cdots, g_r)$ for some $r\leq n$. Then, $(g_1,\cdots, g_r)$ is enumerated in the sequence in step (2). Notice that there exist $h_0,\cdots, h_r$ such that $g = \prod (h_0,g_1,h_1,\cdots,g_r,h_r)$, but by \thref{scablockdecomp}, this implies that $\prod(g_1,\cdots,g_r)$ is an SCA pattern. Because $\psi\left(\prod(g_1,\cdots,g_r)\right) =\psi\left(\prod (h_0,g_1,h_1,\cdots,g_r,h_r)\right) = \psi(g)$ and because $g$ is an SCA representation of $b$, the algorithm accepts in stage 3(d).

If the algorithm accepts, it has constructed an SCA representative for $[w]=b$.
\end{proof}

\subsection{Decidability of Existence of Fixed Width SCA Representations}\label{babyOrbitProblem}
In this section, we design an algorithm to determine whether, given a braid representative $b\in W_n$ and a $w\in\mathbb{N}$, there exists an SCA representative of $[b]$. To do this, we first define some machines:

Let $M$ be a machine that determines if a pattern is an SCA pattern, which exists by \thref{sn}. By \cite[Corollary 2.5]{orbitproblem}, the problem of whether given $w,v\in W_n$ there exists a $k\in\mathbb{N}$ such that $[w]^k = [v]$ is decidable. Let $A$ be a decider for this problem.
\begin{theorem}\thlabel{existancefixedwidthdecidability}
   Fix $n\in\mathbb{N}^+$. The problem of whether, given $\langle b, w\rangle$ with $b\in W_n$ and $w\in\mathbb{N}$, there exists an SCA representation of $[b]$ is decidable.
\end{theorem}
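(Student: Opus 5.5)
We read the statement as: given $\langle b,w\rangle$, decide whether $[b]$ has an SCA representation $g\in S_n^*$ every generation of which has width at most $w$. The plan is to show that only finitely many generations of width at most $w$ exist, and then to reduce the possibly unbounded height to an orbit question that the decider $A$ answers.

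\textbf{Finiteness.} Up to horizontal translation, which changes neither $\psi$ nor SCA-ness by \thref{scablockdecomp}, the set $\Delta_w$ of generations of width at most $w$ on $n$ strands is finite. A cell has one of finitely many contents, and the strand labels are determined by the strand order. Moreover, an SCA pattern is determined by its first generation together with a turning rule and a crossing rule. So the infinitely many candidate patterns split into finitely many classes, indexed by a start generation $\delta_1\in\Delta_w$, a turning rule $t$, and a crossing rule $c$. Each class gives an infinite sequence of generations $\delta_1,\delta_2,\dots$. Since the CA acts deterministically on the finite set $\Delta_w$, but may leave $\Delta_w$ through width growth, the sequence is either eventually outside $\Delta_w$ or eventually periodic within it. Finding where it leaves, or the preperiod $p$ and period $q$, is computable by iterating at most $|\Delta_w|+1$ steps.

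\textbf{Candidate heights.} The finite-height SCA patterns in a class are exactly the prefixes $[\delta_1,\dots,\delta_m]$, by \thref{rem}, since sublists of SCA patterns are SCA. Their $\psi$-images have the form
\[
\psi(a\,d^k\,e),
\]
where $a=[\delta_1,\dots,\delta_p]$, $d$ is one period block, and $e$ is a proper prefix of the period. If the width is exceeded before any period appears, only finitely many prefixes occur. For a fixed class and a fixed remainder $e$, the question becomes whether some $k\in\mathbb{N}$ satisfies
\[
[\psi(a)][\psi(d)]^k[\psi(e)]=[b],
\]
that is, whether $[\psi(d)]^k=[\psi(a)^{-1}\,b\,\psi(e)^{-1}]$. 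This is exactly the input format of $A$. Finitely many cases with small height are handled by Garside's algorithm. We then accept if any case succeeds.

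\textbf{Completeness.} Any representation $g$ of width at most $w$ lies in some class $(\delta_1,t,c)$ with $t\in T_g$ and $c\in C_g$, and is a prefix of the sequence for that class. The step needing the most care is this one: justifying that $g$ is exactly a prefix of the rule-generated sequence. The argument is that every child in $g$ agrees with the unique child forced by $t$ and $c$, by the definition of the rules, which is the same argument as in \thref{independentofChoice}. We also need the grid to be translated consistently so that periodicity is detected up to shift. To handle this, we normalize each generation by its leftmost strand cell; \thref{scablockdecomp} and the shift-invariance of $\gen$ make this harmless. Finally, \thref{torsionFree} shows that when $[\psi(d)]\neq 1$ at most one $k$ works, but this is not needed for decidability; $A$ suffices.
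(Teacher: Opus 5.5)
Your argument is correct, and it ends in the same reduction to the orbit decider $A$ as the paper, but it gets there by a different mechanism.

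The paper works as follows:
\begin{itemize}
\item It enumerates all SCA patterns of width at most $w$ and height at most $8^w$, and runs through every triple $(a,d,c)$ of them.
\item For each triple it asks $A$ whether $[\psi(d)]^k=[\psi(a)^{-1}b\psi(c)^{-1}]$ for some $k$.
\item It then has to certify that the glued pattern $ad^kc$ is actually an SCA pattern. It does this by testing $ac$, $adc$, $ad^2c$ with $M$ and invoking \thref{trcrtransfer}, which says SCA-ness is the same for all exponents $\geq 2$. Garside's algorithm and \thref{torsionFree} patch the cases where the witnessing exponent is $0$ or $1$.
\item Completeness rests on the implicit fact that a width-bounded SCA pattern, being deterministically generated, becomes periodic within $8^w$ generations.
\end{itemize}

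You instead enumerate the finitely many classes $(\delta_1,t,c)$ and simulate the translation-normalized dynamics. This buys you three things:
\begin{itemize}
\item The preperiod and period are computed exactly rather than guessed by ranging over all triples.
\item Every candidate is a prefix of a rule-generated sequence, hence automatically an SCA pattern. This removes the need for \thref{trcrtransfer}, \thref{torsionFree}, and the branching case analysis.
\item It makes explicit the translation normalization that the paper's count of $8^w$ generations silently assumes, since with absolute positions there are infinitely many generations of width at most $w$.
\end{itemize}

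The price is that you use more of the model's internals. You need that there are only finitely many turning and crossing rules (fixed-length bit strings). You also need that any rule pair applied to any first generation yields a continuous pattern having those rules. The paper, by contrast, only needs the black-box decider $M$ from \thref{sn} plus a purely combinatorial gluing lemma.
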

\begin{proof}
We present an algorithm that we claim decides this problem:

\hfill\break
\textbf{Algorithm For Existence} (of a fixed width representation):

\hfill\break
On input $\langle b,w\rangle$, for $b\in W_n$ and $w\in\mathbb{N}$:
\hfill\break
\begin{enumerate}
    \item Enumerate all possible grid patterns on $n$ strands of width $w$ and height at most $8^w$.
    \hfill\break
    ///Notice that we choose $8^w$ in the above because any SCA pattern of width at most $w$ must repeat after $8^w$ generations.
    \item Run $M$ on each pattern and delete the patterns that are not SCA patterns from the enumeration. If the enumeration is empty, reject.
    \item Of the remaining patterns, enumerate all ordered 3-tuples of patterns from the enumeration as $(a_1, d_1, c_1),\cdots, (a_l,d_l,c_l)$.
    \item For $1\leq i< l$:
    \begin{enumerate}
        \item Compute $\psi(a_i), \psi(d_i),\psi(c_i)$ and form $(\psi(d_i), \psi(a_i)^{-1}b\psi(c_i)^{-1})$.
        \item Run $A$ on $\langle \psi(d_i), \psi(a_i)^{-1}b\psi(c_i)^{-1}\rangle$. If it rejects, move on to $i +1$. If it accepts:
        \begin{enumerate}
            \item Run $M$ on $a_ic_i$, $a_id_ic_i$, and $a_id_i^2c_i$. If all three patterns receive a rejection, move on to $i + 1$. If no pattern is rejected by $M$ machine, accept. 
            \item If $a_id_i^2c_i$ is rejected by $M$ but the elements of some nonempty subset $P\subseteq\{a_ic_i,a_id_ic_i\}$ are the elements of $\{a_ic_i,a_id_ic_i\}$ accepted by $M$:
            \begin{enumerate}
                \item For each $p\in P$, run Garside's algorithm on $p$ and $b$. If it accepts for some $p\in P$, accept; otherwise move to stage $i+1$.
            \end{enumerate}
            \item If $a_ic_i$ is rejected by $M$ but $a_id_ic_i$ and $a_id_i^2c_i$ are accepted by $M$:
            \begin{enumerate}
                \item Run Garside's algorithm on $\psi(a_ic_i)$ and $b$. If it rejects, accept. If it accepts:
                \item Run Garside's algorithm on $\psi(d_i)$ and $1$. If it accepts; accept. If it rejects, move on to stage $i + 1$.
            \end{enumerate}
            \item If $a_id_ic_i$ is rejected but $a_ic_i$ and $a_id_i^2c_i$ are accepted:
            \begin{enumerate}
                \item Run Garside's algorithm on $\psi(d_i)$ and $1$. If it accepts, accept. Otherwise:
                \item Run Garside's algorithm on $\psi(a_ic_i)$ and $b$. If it accepts, accept, otherwise:
               \item Run Garside's algorithm on $\psi(d_i)$ and $\psi(a_i)^{-1}b\psi(c_i)^{-1}$. If it accepts, move to stage $i+1$; if it rejects, accept.
            \end{enumerate}
            \item If $a_ic_i$ and $a_id_ic_i$ are rejected and $a_id_i^2c_i$ is accepted:
            \begin{enumerate}
                \item Run Garside's algorithm on $\psi(d_i)$ and $1$. If it accepts, accept; otherwise:
                    \item Run Garside's algorithm on $\psi(d_i)$ and $1$. If it accepts, accept, otherwise:
                        \item Run Garside's algorithm on $1$ and $\psi(a_i)^{-1}b\psi(c_i)^{-1}$. If it accepts, accept. Otherwise:
                     
                            \item Run Garside's algorithm on $\psi(d_i)$ and $\psi(a_i)^{-1}b\psi(c_i)^{-1}$. If it accepts, accept. Otherwise, move on to stage $i+1$.
            \end{enumerate}
            \end{enumerate}
    \end{enumerate}
    \item If the machine reaches this stage, reject.
\end{enumerate}
Notice that because $\psi$ is computable and because Garside's algorithm always halts, the above algorithm always halts. We claim that the algorithm accepts only the appropriate input. Suppose $[b]$ has a representative of width at most $w$, call it $g$. Suppose $g = [\delta_1,\cdots,\delta_m]$. If $\length(g)\leq8^w$, let $a = g$ and if $\length(g)>8^w$, let $a$ be the first $8^w$ generations of $g$. Notice that if there exist more than $8^w$ remaining generations, there must exist a sublist $d$ of $g$ such that $ad^k$ is a sublist of $g$ for some $k\geq 1$ such that $\length(g) - \length(ad^k)<\length(d)$ by the fact that there are $8^w$ possible generations of width at most $w$. In the case where such generations do not exist, let $d =[]$. Let $c$ be $[\delta_{\length(ad^k)+1},\cdots,\delta_m]$. Notice that $M$ accepts $a, d, c$ (see \thref{rem}). Then, because there exists some $k\in\mathbb{N}^+$ such that $[ad^kc] = [b]$, $A$ accepts $\langle \psi(d),\psi(a)^{-1}b\psi(c)\rangle$. If $k =1$, then the algorithm accepts in one of (b)(i), (b)(ii), or (b)(iii). If $k >1$, then the algorithm accepts in one of (b)(i), b(iii), (b)(iv), or (b)(v) by \thref{trcrtransfer} and \thref{torsionFree}. 

Now suppose the machine accepts. Then, there exists a tuple $(a, d, c)$ that is accepted in stage 4. If it is accepted in stage (b)(i), then $ad^kc$ is an SCA pattern for all $k\in\mathbb{N}$ by \thref{trcrtransfer} and there exists a $k\in\mathbb{N}$ such that $\psi(ad^kc)=\psi(a)\psi(d)^k\psi(c)$ is equivalent to $b$ under the braid group relations, so $ad^kc$ is an SCA representation of $[b]$. If the tuple is accepted in (b)(ii), then one of $ac$ or $adc$ is an SCA representation of $[b]$. If the tuple is accepted in (b)(iii), then $ad^kc$ is an SCA representation of $[b]$ for some $k\in\mathbb{N}^+$ by \thref{trcrtransfer}. If the tuple is accepted in (b)(iv), then $ac$ or $ad^kc$ is an SCA representation of $[b]$ for some $k\geq 2$ by \thref{trcrtransfer}. Finally, if the tuple is accepted in (b)(v), $ab^kc$ is an SCA representation of $[b]$ for some $k\in\mathbb{N}_{\geq 2}$ by \thref{trcrtransfer}. Thus, in all cases, $[b]$ has an SCA representation with width at most $w$.
\end{proof}
\section{Compact Representations}\label{compactReps}
We first introduce a few definitions.
\begin{definition}
    A $g\in G_n^*$ is \textbf{compact} if, for each generation $\delta_i$ of $g$, the position of the leftmost strand subtracted from the position of the rightmost strand is $n - 1$.
\end{definition}
Intuitively, a compact grid pattern is a pattern where the strands are always as ``close" together as possible.
\begin{definition}
    A word $w\in W_n$ is \textbf{compactly SCA-representable} if there exists a compact $g\in S_n^*$ such that $\psi(g) = w$.
\end{definition}
Intuitively, a compactly SCA-representable word is a word that is representable in the ``tamest", or most intuitive, possible way, in the sense that the strands in the representation need not have gaps between them.
\begin{definition}
    A braid $b\in B_n$ is \textbf{compactly SCA-representable} if $b$ has an SCA representation $s\in S_n^*$ that is compact. We call the maximal set of elements $S'$ of $S_n^*$ such that each $s\in S'$ is compact and $\psi[S']\subseteq b$ the compact SCA representation set of $b$.
\end{definition}
 Notice immediately that the following can be proved by modifying the proof of \thref{existancefixedwidthdecidability} slightly:
 \begin{corollary}
     Let $b = [w]\in B_n$. Then, the problem of whether $b$ has a compact SCA representation is decidable.
 \end{corollary}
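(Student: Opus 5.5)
The plan is to reduce the corollary to the fixed width decision procedure of \thref{existancefixedwidthdecidability}. A compact pattern $g\in S_n^*$ has, in every generation, the leftmost and rightmost strands exactly $n-1$ positions apart. Since each cell holds at most two strands, every generation of $g$ then occupies a bounded number of cells. In particular the width of every generation is at most $n$, so compact patterns form a subclass of the patterns of width at most $n$. I would therefore run the Algorithm For Existence on input $\langle w, n\rangle$, with two changes. First, in step 1, only compact grid patterns are enumerated, up to horizontal translation. Second, in step 2, the patterns that are not compact are also discarded; compactness is checkable generation by generation, so this is a decidable filter.

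Before reusing the correctness argument I would record three closure facts.
\begin{enumerate}
\item Compact patterns are closed under concatenation. Compactness is a condition on each generation separately, and the list $ad^kc$ consists of generations of $a$, $d$, $c$.
\item Every sublist of a compact pattern is compact.
\item Up to translation there are only finitely many compact generations. Hence some bound $K$, for instance $8^n$ or simply the number of compact generations, forces repetition: any compact SCA pattern longer than $K$ contains a repeated generation, and since each generation determines the next under fixed rules, the pattern is eventually periodic.
\end{enumerate}
Given these, the completeness direction follows the original proof word for word. Take a compact SCA representative $g$ of $b$ and decompose it as $g = ad^kc$ with $\length(a)\le K$. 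By the second fact, $a$, $d$, $c$ are compact, and by \thref{rem} they are SCA patterns, so they appear in the enumeration. The case analysis then uses \thref{trcrtransfer} and \thref{torsionFree} exactly as before, with the orbit decider $A$ and Garside's algorithm. For soundness, every pattern the modified algorithm certifies has the form $ac$ or $ad^kc$ built from compact pieces, so it is compact by the first fact, and \thref{trcrtransfer} shows it is an SCA pattern as in the original argument.

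The main obstacle is translation invariance. The cells carry absolute column indices $j\in\mathbb{Z}$, and the concatenated list $ad^kc$ must be a genuine grid pattern. So the last generation of $a$ must feed into the first generation of $d$ at the same horizontal position, and likewise between copies of $d$ and between $d$ and $c$. I would handle this by normalizing every compact generation so that its leftmost strand sits in column $0$. I would then argue that the turning and crossing rules, the continuity conditions of Algorithm 1, and $\psi$ depend only on relative positions. This makes the finite enumeration legitimate and lets the repetition argument produce literal repeats. A compact pattern could drift horizontally: its leftmost strand may move by one column each generation. If so, I would compare generations modulo translation and let $d$ absorb the shift. \thref{trcrtransfer} should still apply, because its proof only compares local neighbourhoods of cells together with their child cells, and these are translation invariant.
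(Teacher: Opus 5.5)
Your proposal is correct and follows the paper's route: the paper justifies this corollary only by saying the proof of \thref{existancefixedwidthdecidability} can be modified slightly. Your modification is exactly that: enumerate only compact patterns, using that compactness is a per-generation condition inherited by sublists and preserved under concatenation. The translation worry in your last paragraph does not arise, since, as the paper notes, every cell entry $v_j^{(i)}$ of a compact pattern has $j=i$, so strands never drift and there are literally finitely many compact generations, which gives the repetition bound directly.
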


We present results that characterize the structure of compact SCA representations of a given word, along with results that can be used to prove that a set of braids is compactly SCA representable by only proving that a single word is compactly SCA representable.


To start, we need a bit more terminology:
\begin{definition}
  A generation $k$ is \textbf{type A} if the cell $C_{k, 0}$ contains exactly two strands and \textbf{type B} if this is not the case.  
\end{definition}
Recall that $C_{k,0}$ must always contain at least one strand, so a generation $k$ is type B if $C_{k,0}$ contains exactly one strand.

Notice that in a compact grid pattern, $v_j^{(i)}$ must have $j = i$ for $v\in\{s, r, l\}$. Consider a compact grid pattern $g$. If there exists a crossing in some generation $\delta_i$ of $g$ on strands $j, j+1$ such that $j$ is odd, $\delta_i$ must be a type A generation. To see this, notice that if $C_{j,0}$ contains two strands, strand $j-1$ would be contained in the same cell as strand $j$, a contradiction, as strand $j+1$ would be contained in a different cell. Similarly, if there is a crossing on strands $j, j+1$ such that $j$ is even in some generation $\delta_i$, $\delta_i$ must be a type B generation. The following graphic demonstrates the difference between type A and type B generations:

\begin{center}
    \includegraphics[scale=0.7]{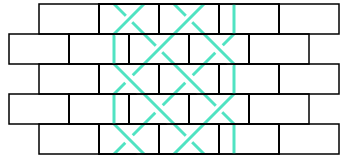}
\end{center}
We can see from the figure that $[r_2^{(2)}, l_3^{(3)}]$ or $[r_4^{(4)}, l_5^{(5)}]$ could not occur in generations 1, 3, or 5 -- the type A generations of this pattern. We can also see that $[r_1^{(1)}, l_2^{(2)}]$ or $[r_3^{(3)}, l_4^{(4)}]$ could not occur in generations 2 or 4 -- the type B generations of this pattern.

\subsection{$\gamma$ Algorithm}
In this section, we describe an algorithm for converting words in $W_n$ to compact grid patterns in $G_n^*$. We will refer to the algorithm as the $\gamma$ algorithm, and refer to the function computed by it as $\gamma:W_n\to G_n^*$. We first define the following function:
\begin{gather*}
    \beta:\{\sigma_a, \sigma_a^{-1}: 1\leq a\leq n - 1\}\to\{i: 1\leq i\leq n - 1\}\\
    \sigma_a^{\pm 1}\mapsto a.
\end{gather*}
Intuitively, this function finds the number of the first strand involved in the crossing for any crossing $\sigma_a^{\pm1}$.

We use algorithms which we refer to as the type A and type B algorithms in the $\gamma$ algorithm. These algorithms are listed after the $\gamma$ algorithm.

\hfill\break
$\gamma$ \textbf{Algorithm:}

\hfill\break
Let $w\in W_n, w = x_1^{(1)}x_2^{(1)}\cdots x_m^{(1)}$ where each $x_i^{(1)}$ represents the generator at position $i$ in $w$. The algorithm will return $g_r$ (for some $r\in\mathbb{N}$), a compact grid representation such that $\psi(g_r) = w$.
\begin{enumerate}
    \item Define $g_0 = []$.
    \item Determine if $w=1$. If so, return $g_0$.
    \item Define $w_1 = w$ and compute $\beta(x_1^{(1)})$.
    \begin{enumerate}
        \item If $\beta(x_1^{(1)})$ is odd, call the type A algorithm on $(w_1,[])$. Call its return $(w_2,\delta_1)$. 
        \hfill\break
        /// $\delta_1$ will correspond to a prefix of $w_1$ and $w_{2}$ will correspond to the rest of $w_1$.
        \item If $\beta(x_1^{(1)})$ is even, call the type B algorithm on $(w_1,[])$. Call its return $(w_2,\delta_1)$.
    \end{enumerate}
    \item Define $g_1 = [\delta_1]$.
    \item For $r>1$:
    \begin{enumerate}
        \item Determine if $w_r = 1$. If so, return $g_{r-1}$.
        \item If a type A (type B) step was executed at $r-1$, execute an type A (type B) step on input $(w_r, \delta_{r-1})$. Call the return $(w_{r+1},\delta_r)$.
        \item Define $g_r = g_{r-1}\bullet[\delta_r]$.
    \end{enumerate}
\end{enumerate}

\hfill\break
\textbf{Type A Algorithm:} 

\hfill\break
Given a pair $(w_r, \delta_{r-1})$ such that $w_r\in W_n$ where $w_r = x_1^{(r)}x_2^{(r)}...x_m^{(r)}$ and $\delta_{r-1}$ is a list of generations, we will return a pair $(w_{r+1}, \delta_r)$ such that $w_{r+1}$ is a suffix of $w_r$ and $\delta_r$ is a list of generations that corresponds to a prefix of $w_r$:
\begin{enumerate}
    \item  If $w_r = 1:$ Define $w_{r+1} =1$ and $\delta_r = \delta_{r-1}$, go to step 6.
    \item Determine the maximal $i\leq m$ such that both conditions are met:
    \begin{itemize}
    \item For all $1\leq k\leq i$, $\beta(x_k)$ is odd.
    \item For all $1\leq j < k \leq i$, $\beta(x_j) < \beta(x_k)$.
\end{itemize}

    \hfill\break
    ///In the following step, we build the list of cells to return by determining where in the list the crossings in $x_1^{(r)}x_2^{(r)}\cdots x_i^{(r)}$ should be put:
    \item For $1\leq t\leq \lceil\frac{n}{2}\rceil$ ($t$ will correspond to a cell number):
    \begin{enumerate}
        \item If there exists an $h$ with $1\leq h\leq i$ such that $\lceil\frac{\beta(x_h)}{2}\rceil = t$, define $C_{r, t} = [r_{a}^{(a)}, l_{a+1}^{(a+1)}]$ if $x_h^{(r)}$ is $\sigma_a$ and $C_{r, t} = \overline{[r_{a}^{(a)}, l_{a+1}^{(a+1)}]}$ if $x_h^{(r)}$ is $\sigma_a^{-1}$. 
    \item If (a) is not satisfied and $\lceil\frac{n}{2}\rceil > t$, define $C_{r, t} = [s_{2t - 1}^{(2t-1)}, s_{2t}^{(2t)}]$
    \item If (a) and (b) are not satisfied:
    \begin{enumerate}
        \item If $n$ is odd: Define $C_{r, t} = [s_n^{(n)}, n_{n+1}^{(\varnothing)}]$.
        \item If $n$ is even: Define $C_{r, t} = [s_{2t - 1}^{(2t-1)}, s_{2t}^{(2t)}]$.
    \end{enumerate}
    \end{enumerate}
    \item Define $w_{r+1} = x_{i+1}^{(r)}\cdots x_m^{(r)}$.
    \item Define $\delta_r = C_{r, 1}C_{r, 2} \cdots C_{r, \lceil\frac{n}{2}\rceil}$.
    \item Return $(w_{r+1}, \delta_r)$.
\end{enumerate}

\hfill\break
\textbf{Type B Algorithm:}

\hfill\break
Given a pair $(w_r, \delta_{r-1})$ such that $w_r\in W_n$ where $w_r = x_1^{(r)}x_2^{(r)}...x_m^{(r)}$ and $\delta_{r-1}$ is a generation, we will return a pair $(w_{r+1}, \delta_r)$ such that $w_{r+1}$ is a suffix of $w_r$ and $\delta_r$ is a list of generations that corresponds to a prefix of $w_r$:
\begin{enumerate}
    \item If $w_r = 1:$ Define $w_{r+1} = 1$ and $\delta_r = \delta_{r-1}$, go to step 6.
    \item Determine the maximal position $i\leq m$ in $w_r$, such that both conditions are met:
    \begin{itemize}
        \item For all $1\leq k\leq i$, $\beta(x_k)$ is even.
        \item For all $1\leq j < k \leq i$, $\beta(x_j) < \beta(x_k)$.
    \end{itemize}
    If such an $i$ does not exist, define $i=0$.

    \hfill\break
    ///In the following step, we build the list of cells to return by determining where in the list the crossings in $x_1^{(r)}x_2^{(r)}\cdots x_i^{(r)}$ should be put:
    \item For $1\leq t\leq \lfloor\frac{n}{2}\rfloor + 1$ ($t$ will correspond to a cell number):
    \begin{enumerate}
    \item Define $C_{r, 0} = [n_0^{(\varnothing)}, s_1^{(1)}]$ if $t = 1$.

    \hfill\break
    ///Step (a) differs from the type A algorithm here because the cells output by the type B algorithm contain crossings on strands $a$ and $a+1$ such that $a$ is even.
    \item If there exists a $h$ such that $1\leq h\leq i$ and $\lfloor\frac{\beta(x_h)}{2}\rfloor = t$, define $C_{r, t} = [r_{a}^{(a)}, l_{a+1}^{(a+1)}]$ if $x_h^{(r)}$ is $\sigma_a$ and $C_{r, t} = \overline{[r_{a}^{(a)}, l_{a+1}^{(a+1)}]}$ if $x_h^{(r)}$ is $\sigma_a^{-1}$. 
    \item If no such $h$ exists, $t\neq 1$, and $\lfloor\frac{n}{2}\rfloor + 1 > t$, define $C_{r, t} = [s_{2t - 2}^{(2t-2)}, s_{2t - 1}^{(2t-1)}]$
    \item If (a), (b), and (c) are not satisfied:
    \begin{enumerate}
        \item If $n$ is even: Define $C_{r, t} = [s_n^{(n)}, n_{n+1}^{(\varnothing)}]$.
        \item If $n$ is odd: Define $C_{r, t} = [s_{2t - 2}^{(2t-2)}, s_{2t - 1}^{(2t-1)}]$
    \end{enumerate}
    \end{enumerate}
    \item Define $\delta_r = C_{r, 1}C_{r, 2}\cdots C_{r, \lfloor\frac{n}{2}\rfloor + 1}$.
    \item Define $w_{r+1} = x_{i+1}^{(r)}\cdots x_m^{(r)}$.
    \item Return $(w_{r+1}, \delta_r)$.
\end{enumerate}
\begin{lemma}\thlabel{halts}
    The $\gamma$ algorithm halts on any input.
\end{lemma}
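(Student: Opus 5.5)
We prove \thref{halts} by a termination argument on the length of the remaining word: each call to the type A or type B algorithm either strictly shortens the word, or is followed by a call that does. Let $m_r$ denote the length of $w_r$. Steps (1)--(4) of the $\gamma$ algorithm are finite: checking $w=1$, computing $\beta(x_1^{(1)})$ and making one call. So it suffices to show that the loop in step (5) runs only finitely many times.

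First, each individual call halts. In either algorithm, step (2) searches for a maximal $i\leq m$ among finitely many positions, and each condition is a finite comparison of values of $\beta$. Step (3) loops over a bounded range of $t$, at most $\lceil n/2\rceil$ or $\lfloor n/2\rfloor+1$ values. Each case in step (3) needs only a finite search over $h\leq i$. Every call therefore returns after finitely many operations.

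Next, the word shrinks. The returned word is $w_{r+1}=x^{(r)}_{i+1}\cdots x^{(r)}_m$, so $m_{r+1}=m_r-i$. If $w_r\neq 1$ and $\beta(x_1^{(r)})$ has the parity matching the algorithm being called, then $i\geq 1$: the single letter $x_1^{(r)}$ satisfies both conditions vacuously. In that case $m_{r+1}<m_r$.

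The main obstacle is the parity bookkeeping. As written, step 5(b) repeats the same type as the previous step, so when the parity of $\beta(x_1^{(r)})$ does not match, the call could return $i=0$ and leave $w_r$ unchanged. We read the $\gamma$ algorithm as intended to alternate type A and type B steps, since consecutive generations of a compact pattern alternate type, as discussed before the algorithm. Under that reading, two consecutive calls cover both parities. Whichever parity $\beta(x_1^{(r)})$ has, at least one of the two calls consumes a letter. Hence $m_{r+2}<m_r$ whenever $w_r\neq 1$.

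Since $m_r$ is a nonnegative integer that strictly decreases at least every two iterations, after at most $2m_1$ iterations we reach $w_r=1$. Step 5(a) then returns $g_{r-1}$, and the algorithm halts.
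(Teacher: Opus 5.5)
Your proposal is correct and follows the paper's own argument. The paper also shows that a type A call run on the output of a type B call (or vice versa) strictly shortens the word, since whichever parity $\beta(x_1)$ has, one of the two calls can take $i\geq 1$. Your explicit reading of step 5(b) as alternating types is the reading the paper's proof tacitly relies on; under the literal reading, e.g.\ $w=\sigma_1\sigma_2$ would loop forever. Your per-call termination check and $2m_1$ bound just make the paper's sketch more explicit.
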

\begin{proof}
     It suffices to show that given any appropriate pair $(w, \delta)$ with $w\neq1$, the return of the type A (type B) algorithm run on the output of the type B (type A) algorithm is a pair $(v,\delta')$ such that $\length(v)<\length(w)$. 

     Let $(w, \delta)$ be such that $w\neq1$. Then either $\beta(x_1^{(1)})$ is even or it is odd. First consider the case where $\beta(x_1^{(1)})$ is even. The type B (type A) algorithm has an $i\in[m]$ such that (2)(a) is met. By possibly choosing a smaller $i$, we may ensure condition (2)(b) is met for whichever algorithm the $i\in[m]$ was chosen for, as $i=1$ satisfies both conditions but may not be maximal. Notice that step (3) must always terminate by construction. Thus, step (5) is reached, and $\length(v) = \length(w)-i<\length(w)$ (in the case where $i\in[m]$ was chosen for the type A (type B) algorithm, step (5) is reached after the type B (type A) algorithm is run on $(w,\delta)$ and returns $(w,\delta)$ and then after the type A (type B) algorithm is run on $(w,
     \delta)$).
\end{proof}
The following is clear from the construction of the $\gamma$ algorithm:
\begin{remark}
    For any $w\in W_n, \gamma(w)$ is a compact grid representation of $w$.
\end{remark}
\subsection{Structure of Compact SCA Representations}
In this section, we will prove results about the structure of the grid patterns the  $\gamma$ algorithm returns, and the structure of SCA representations of a word $w$ in relation to $\gamma(w)$. Recall the ordering introduced in \ref{height}.
\begin{definition}\thlabel{orderingdef}
    Let $g$ be a grid pattern. If there exists a cell $C_{i,j}$ such that there exists no grid pattern $h$ with $\varphi(g) = \varphi(h)$ and a cell $C_{a, b}$ of $h$ such that at least one of $a<i, b<j$ is true and $\varphi(g^{<C_{i,j}}) = \varphi(h^{C_{a,b}})$, we say that $C_{i,j}$ cannot be \textbf{placed in a lower position}. 
\end{definition}
\begin{definition}
    We say a compact SCA pattern $g$ \textbf{has trim} if there exists some $t\in\mathbb{N}^+,t\leq\length(g)$ such that for all $i\geq t$, $\psi(\delta_t) = 1$ or if $\psi(\delta_1) = 1$.
\end{definition}
\begin{definition}
    We call a compact SCA pattern $g$ \textbf{trimmed} if $\delta_0$ and $\delta_{\length(g)}$ both contain crossings.
\end{definition}
Notice that for any compact SCA pattern $g$ such that $\psi(g)\neq1$, there exists a trimmed sublist $h$ of $g$. By \cite[Lemma 8.5]{gliders}, $h$ is an SCA pattern. Additionally, $\psi(h) = \psi(g)$.
\begin{lemma}\thlabel{irbm}
    Let $w\in W_n$ for some $n\geq 3$, $w\neq1$. The word $w$ is compactly SCA-representable if and only if $\gamma(w)$ is an SCA pattern.
\end{lemma}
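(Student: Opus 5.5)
The reverse direction is immediate. By the Remark following the $\gamma$ algorithm, $\gamma(w)$ is a compact grid pattern with $\psi(\gamma(w)) = w$. If $\gamma(w)\in S_n^*$, then $\gamma(w)$ itself witnesses that $w$ is compactly SCA-representable.

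For the forward direction, suppose $g\in S_n^*$ is compact with $\psi(g)=w$. The plan is to show that $\gamma(w)$ is obtained from $g$ by operations that preserve being an SCA pattern, namely deleting generations and passing to sublists. There are three steps.

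\emph{Reduce to a trimmed pattern.} Since $w\neq 1$, the note preceding the lemma gives a trimmed sublist $h$ of $g$. It is an SCA pattern with $\psi(h)=w$, so we may assume $g$ is trimmed.

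\emph{Record the forced structure of compact patterns.} In a compact pattern every cell has the form $v_j^{(j)}$. A crossing on strands $a,a+1$ with $a$ odd forces a type A generation, and with $a$ even it forces a type B generation. Within one generation the crossing cells appear left to right in increasing order of $\beta$, so each generation containing crossings contributes to $\psi(g)$ a subword in which all $\beta$ values are odd (respectively all even) and strictly increasing. These are exactly the blocks the type A and type B algorithms peel off.

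\emph{Show that $\gamma$ parses $w$ in a greedy way compatible with $g$.} Write the generations of $g$ with crossings as $\delta_{i_1},\dots,\delta_{i_s}$, so that $w=\psi(\delta_{i_1})\cdots\psi(\delta_{i_s})$. The $\gamma$ algorithm instead cuts $w$ into maximal blocks of increasing same-parity $\beta$. I would compare the two factorizations by induction on the generation index. I expect each maximal block of $\gamma(w)$ to be a union of consecutive blocks of $g$, and the generations of $g$ lying between them to contain no crossings.

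The key local fact is that in a compact SCA pattern the parity of generation types alternates. So a generation with no crossings between two crossing generations of the same parity, or a forced gap, carries a fixed local configuration of straight and turning strands. Since $g$ is deterministic under its turning and crossing rules, such a configuration produces the same children wherever it occurs.

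Using this, I would argue that every adjacent-cell configuration occurring in $\gamma(w)$, together with its child, already occurs in $g$. Then every bit $\gamma(w)$ contributes to the turning and crossing rules agrees with the bits of $g$, so any turning rule and crossing rule of $g$ is also one for $\gamma(w)$. This is the same bit-matching argument as in \thref{trcrtransfer} and \thref{scablockdecomp}, and it gives $\gamma(w)\in S_n^*$.

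\textbf{Main obstacle.} The hardest step is the configuration-matching. The generations of $\gamma(w)$ merge crossings that $g$ may have placed in different generations, for example when $g$ inserts a crossing-free generation of the same type. So adjacencies in $\gamma(w)$ need not be literal adjacencies in $g$.

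My first attempt would handle this with the parity constraint. Consecutive generations of a compact pattern alternate between type A and type B. Hence $g$ can only delay a crossing by an even number of generations, and the intervening crossing-free generations are forced by the rules to be all-straight, with $S$ and $T$ cells matching those of $\gamma$. The child of a pair that is a crossing cell next to a straight cell is then the same in $g$ as in $\gamma(w)$, because the rule bits $\frac{X}{YZ}$ involved are already realized in $g$.

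If that fails, the fallback is to minimize over all compact SCA representations of $w$: take one whose cells are placed in the lowest possible positions, in the sense of \thref{orderingdef}, and show by minimality that it equals $\gamma(w)$.
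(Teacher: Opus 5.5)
\textbf{There is a genuine gap: the bit-matching step cannot succeed.} Your reverse direction and the reduction to a trimmed $g$ are fine; the problem is the forward direction.

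Let $C_{i,j}$ be the first cell, in the lexicographic order, at which a trimmed compact SCA representation $g$ and $\gamma(w)$ differ. Both first generations have the type forced by the parity of $\beta$ of the first letter of $w$, and types alternate, so the two patterns have the same cell shapes. Since the $\gamma$ algorithm is greedy and the words agree, the only possible difference is that $\gamma(w)$ has a crossing $x$ at $C_{i,j}$ while $g$ has two straight strands there.

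The parents of $C_{i,j}$ lie in generation $i-1$, so they are identical in the two patterns. Hence $g$ sends that parent configuration to a straight cell, while $\gamma(w)$ sends the same configuration to a crossing. So $\gamma(w)$ contributes a bit that conflicts with every turning rule of $g$. Your claim that every adjacent-cell configuration of $\gamma(w)$, together with its child, already occurs in $g$ is therefore false whenever $g\neq\gamma(w)$. Your strategy can only work once you know $g$ agrees with $\gamma(w)$, and that is exactly what has to be proved. In particular, your ``first attempt'' tries to reconcile crossings that $g$ delays by an even number of generations. Such delays must instead be ruled out.

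\textbf{The missing idea} is the rule conflict that rules delays out, which is how the paper argues. Suppose $g$ postpones $x$ to a cell $C_{a,b}$.
\begin{itemize}
\item No crossing can occur in $g$ strictly between $C_{i,j}$ and $C_{a,b}$, since it would be a letter of $w$ out of order.
\item Since $x$ can only sit in a generation of the same type as $\delta_i$, we get $a\geq i+2$, so $\delta_{i+1}$ is an all-straight compact generation.
\item The parents of $C_{a,b}$ lie in a crossing-free generation, so they are straight and every turning rule of $g$ contains $\frac{T}{SS}$.
\item The straight cell $C_{i,j}$ and its crossing-free right neighbour produce a straight cell in $\delta_{i+1}$, forcing $\frac{S}{SS}$. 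When $C_{i,j}$ contains strand $n$, one finds a straight-from-straight configuration in $\delta_{i+1},\delta_{i+2}$ for $n>3$, or in $\delta_i,\delta_{i+1}$ for $n=3$.
\end{itemize}
These two bits contradict each other, so a trimmed compact SCA representation coincides with $\gamma(w)$, and no configuration matching is needed at all.

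Your fallback through a lowest-placed representative does not fill the gap either. Minimality alone cannot show that representative equals $\gamma(w)$, because lowering a crossing requires knowing the modified pattern is still an SCA pattern, which is the very point in question. The contradiction above avoids minimality altogether.
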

\begin{proof}
    By the construction of the $\gamma$ algorithm, $\gamma(w)$ is a compact grid representation of $w$. Suppose $w$ is compactly SCA-representable. For contradiction, $\gamma(w)$ is not an SCA pattern. Then, for any compact SCA representation $g$ of $w$, $g\neq\gamma(w)$. Suppose $g$ is a trimmed compact SCA representation of $w$. Let $g = [\delta_1,\cdots,\delta_m]$ and $\gamma(w) = [\delta_1',\cdots,\delta_t']$. Then, by the assumption that $g$ is trimmed and the construction of $\gamma(w)$, $\delta_1$ and $\delta_1'$ are of the same type. By compactness, this implies $\delta_i$ and $\delta_i'$ are of the same type for every $1\leq i\leq \min\{m,t\}$. Thus, a cell $C_{i,j}$ in $g$ contains $i$ strands if and only if the cell $C_{i,j}'$ in $\gamma(w)$ contains $i$ strands, for all $i\in\{0,1,2\}$ by compactness. Let $C_{i,j}$ be the first cell of $g$ (under the ordering defined in \ref{height}) at which $g$ and $\gamma(w)$ differ. Then, one of $C_{i,j}$ and $C_{i.j}'$ must contain a crossing, and the other must contain two straight strands. By the construction of $\gamma(w)$, $C_{i,j}'$ contains the crossing. Because $\psi(g) = \psi(\gamma(w)) = w$, there must exist a $C_{a,b}>C_{i,j}$ that contains the crossing which $C_{i,j}'$ contains. However, by the fact that if $\delta_i$ is type A (type B) $\delta_{i+1}$ must be type A (type B), $a> i+1$. Thus, $\psi(\delta_{i+1}) = 1$, and for all $C_{k,l}$ with $C_{i,j}\leq C_{k,l}<C_{a,b}$, $\psi([C_{k,l}]) = 1$. Because $\delta_{i+1}$ is a compact generation containing only straight strands, the parents of $C_{a,b}$ must be of the form $([\cdots,s_u^{(u)}],[s_{u+1}^{(u+1)},\cdots])$, so all turning rules of $g$ also contain the bit $\frac{T}{SS}$. 
    We now consider two cases.

    \hfill\break
    \textbf{Case 1:} $C_{i,j}$ does not contain strand $n$.

    By the compactness of $g$, $\gen(C_{i,j},C_{i,j+1})$ is a cell containing two straight strands. Thus, all turning rules of $g$ have bit $\frac{S}{SS}$, a contradiction.

    \hfill\break
    \textbf{Case 2:} $C_{i,j}$ contains strand $n$.

    We consider two subcases:

    \textbf{Subcase A:} $n > 3$

    Notice that generations $\delta_{i+1},\delta_{i+2}$ must contain the turning configuration (in the sense of \cite[Section 3]{gliders}):
\begin{center}
    \includegraphics[scale=0.5]{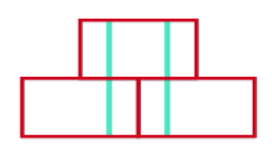}
\end{center}
    Thus, all turning rules of $g$ contain the bit $\frac{S}{SS}$, a contradiction.

    \textbf{Subcase B:} $n = 3$

    Notice that generations $\delta_i,\delta_{i+1}$ must be of the form:
\begin{center}
        \includegraphics[scale=0.5]{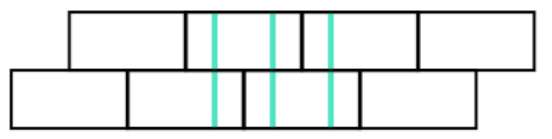}
\end{center}
     Thus, all turning rules of $g$ contain the bit $\frac{S}{SS}$, a contradiction.

Now suppose $\gamma(w)$ is an SCA pattern. Then, $\psi(\gamma(w)) = w$, so $w$ is compactly SCA-representable.
\end{proof}

\begin{proposition}\thlabel{idealRep}
    Let $w\in W_n$. $w$ is compactly SCA-representable if and only if $\gamma(w)$ is an SCA pattern.
\end{proposition}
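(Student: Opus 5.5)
The statement of \thref{idealRep} is \thref{irbm} with the hypotheses $n\geq 3$ and $w\neq 1$ removed. So the plan is to reduce to \thref{irbm} and then handle the remaining cases $w=1$ and $n\leq 2$ directly.

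\textbf{Reduction.} For $w\neq 1$ and $n\geq 3$ the equivalence is exactly \thref{irbm}, so nothing needs to be proved there. The backward direction holds in full generality with no case analysis. By the remark following \thref{halts}, $\gamma(w)$ is a compact grid representation with $\psi(\gamma(w))=w$. So if $\gamma(w)$ is an SCA pattern, it directly witnesses that $w$ is compactly SCA-representable. Only the forward direction needs attention in the leftover cases.

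\textbf{Case $w=1$.} Here the $\gamma$ algorithm returns $g_0=[]$ in step 2. The empty list is vacuously continuous, and every turning and crossing rule is consistent with it. Hence it is an SCA pattern (this can also be seen through \thref{rem}, since $[]$ is a sublist of any SCA pattern). So both sides of the equivalence are true: the right-hand side as just shown, and the left-hand side because $\psi([])=1$ with $[]$ trivially compact.

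\textbf{Case $n\leq 2$.} For $n=1$ the only word is $1$, which was treated above. For $n=2$, every letter is $\sigma_1^{\pm1}$ with $\beta$ odd, so every generation of $\gamma(w)$ is type A and consists of a single cell. That cell is $[r_1^{(1)},l_2^{(2)}]$ or its overlined version, and each generation encodes exactly one letter, since condition (2)(b) of the type A algorithm forces $i=1$.

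A compact SCA representation $g$ of $w$ must have $\psi(g)=w$. After trimming, which preserves the SCA property by the note preceding \thref{irbm}, $g$ can only differ from $\gamma(w)$ by inserting generations with two straight strands, either $[s_1^{(1)},s_2^{(2)}]$ or type B generations. I would then argue as in Case 1 of \thref{irbm}. An inserted straight-strand generation followed by a crossing forces the rules to contain both $\frac{T}{SS}$ and $\frac{S}{SS}$ (or the type B analogue), which is a contradiction. Therefore the trimmed $g$ equals $\gamma(w)$, and $\gamma(w)$ is an SCA pattern.

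\textbf{Expected obstacle.} The delicate part is the $n=2$ bookkeeping. There the single-cell width means the boundary empty cells provide the parents, so the turning configurations used in \thref{irbm} must be re-derived with $C_{k,-1}$ and $C_{k,n_k+1}$ empty. I would first check whether the authors intend $n\geq 3$ implicitly, as in Section \ref{braidgroups}, in which case this case can simply be dismissed.
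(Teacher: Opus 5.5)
Your decomposition is the paper's: the backward direction from $\psi(\gamma(w))=w$, the case $w=1$ via $\gamma(w)=[]$, the case $n\geq 3$, $w\neq 1$ via \thref{irbm}, and a first-difference argument on a trimmed compact representation for $n=2$. Your remark that $W_1=\{1\}$ is a small point the paper leaves implicit.

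The gap is in the $n=2$ case, which starts from a false claim: the generations of $\gamma(w)$ are not all single type A cells. Each generation of the SCA grid is offset by half a cell from the one below, so a compact pattern must alternate type A and type B generations. The literal wording of step 5(b) of the $\gamma$ algorithm suggests repeating the same step type. However, that could not produce a grid pattern at all, and the proof of \thref{halts} (and the later $B_2$ construction) shows the steps alternate. So for $n=2$, $\gamma(w)$ interleaves the crossing cells with $[n_0^{(\varnothing)},s_1^{(1)}][s_2^{(2)},n_3^{(\varnothing)}]$.

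This breaks several parts of your argument. Type B generations are never ``inserted'' relative to $\gamma(w)$. The ``type B analogue'' branch of your contradiction gives nothing, because a straight type B generation followed by a crossing is exactly what happens inside $\gamma(w)$, and it only yields $\frac{T}{SS}$. You also cannot obtain $\frac{S}{SS}$ ``as in Case 1 of \thref{irbm}'', since there it comes from two adjacent full straight cells, which never occur when $n=2$. Finally, your worry about boundary cells is misplaced: the parents of a type A cell here are the two half-filled cells of the preceding type B generation, not $C_{k,-1}$ or $C_{k,n_k+1}$.

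The repair is the paper's argument.
\begin{enumerate}
\item For $n=2$, compactness forces every type B generation to be $[n_0^{(\varnothing)},s_1^{(1)}][s_2^{(2)},n_3^{(\varnothing)}]$, since a lone strand that turns leaves the width-two band.
\item A trimmed $g$ and $\gamma(w)$ both start with a type A generation, so they have the same type at every index and agree at all type B indices.
\item Hence the first index $i$ where they differ is type A. Since $\gamma(w)$ places the next letter of $w$ there, $g$ must have $\delta_i=[s_1^{(1)},s_2^{(2)}]$, and trimming gives $1<i<\length(g)$.
\item The parents of $\delta_i$ are $([n_0^{(\varnothing)},s_1^{(1)}],[s_2^{(2)},n_3^{(\varnothing)}])$, which forces $\frac{S}{SS}$ in every turning rule of $g$.
\item The last generation of $g$ is a crossing with exactly the same parents, which forces $\frac{T}{SS}$ for that same configuration. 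This is a contradiction.
\end{enumerate}
With this correction your proof coincides with the paper's.
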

\begin{proof}
    By the construction of the $\gamma$ algorithm, the backwards direction is trivial.

    Suppose $w$ is compactly SCA-representable. If $w=1$, $\gamma(w) = []$, and $\gamma(w)$ is an SCA pattern. We now consider two cases:
    
\hfill\break
\textbf{Case 1:} $w\neq1$ and $n = 2$
    
    For contradiction, assume the claim is false. Then, for any SCA representation $g$ of $w,$ $g\neq\gamma(w)$. Let $g = [\delta_1,\cdots,\delta_m]$ be a trimmed compact SCA representation of $w$, and let $\gamma(w) = [\delta_1',\cdots,\delta_k
']$. Then, there exists a least $i\leq\min\{k,m\}$ such that $\delta_i\neq\delta_i'$. Notice that $\delta_i,\delta_i'$ each contain at most one crossing. By compactness, $\delta_i = C_{i,0},\delta_i'=C_{i,0}'$ where $C_{i,0},C_{i,0}'$ both contain two strands. By the definition of a representation and the minimality of $i$, $\delta_i$ and $\delta_i'$ cannot both contain crossings, so by the construction of $\gamma$, $C_{i,0} = [s_1^{(1)},s_2^{(2)}]$. By the choice of $g$ as a trimmed pattern, $i>1$. Thus, because $g$ is compact, $\delta_{i-1} = [[n_0^{(\varnothing)},s_1^{(1)}][s_2^{(2)},n_3^{(\varnothing)}]]$.
We may visualize the sublist $[\delta_{i-1},\delta_i]$ of $g$ as follows:
\begin{center}
    \includegraphics{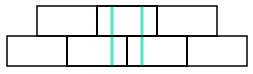}
\end{center}

This implies that $g$ has the bit $\frac{S}{SS}$ in all turning rules, so no generation after $\delta_{i-2}$ can contain a crossing. This contradicts the choice of $g$ as a trimmed pattern. Thus, $\gamma(w)$ is an SCA pattern.

\hfill\break
\textbf{Case 2:} $w\neq1$ and $n\geq 2$

This case follows from \thref{irbm}.
\end{proof}
\begin{lemma}\thlabel{formOfReps}
    Let $w\in W_n, n\geq 3$ be a compactly SCA-representable word with $w = ac^kd$ for some $k\geq 3, c\neq 1$. Then there exists a compact SCA representation $g$ of $w$ such that one of the following holds:
    \begin{enumerate}
        \item  $g = xy^kz$ and $\psi(x) = a, \psi(y) = c, \psi(z) = d$.
        \item $g = tu^{k-1}v,$ and $ u\neq[], \psi(t)\subseteq ac,\psi(u)\subset cc,$ $\psi(v)\subseteq cd$.
    \end{enumerate}
\end{lemma}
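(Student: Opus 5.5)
By \thref{idealRep}, since $w$ is compactly SCA-representable, $\gamma(w)$ is a compact SCA representation of $w$. I will take $g=\gamma(w)$ and read off one of the two forms from the way the $\gamma$ algorithm segments $w=ac^kd$.

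The $\gamma$ algorithm is deterministic. It fixes the type of the first generation from the parity of $\beta(x_1)$, keeps that type throughout (by construction each step is of the same type as the previous one), and greedily consumes a maximal prefix whose generators satisfy the parity and strictly increasing $\beta$ conditions. So $\gamma(w)$ is a concatenation of generations $\delta_1,\ldots,\delta_r$, and $\psi(\delta_1)\cdots\psi(\delta_r)$ is a factorization of $w$ into greedy maximal blocks.

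The plan compares this factorization with the decomposition $a\,c\,c\cdots c\,d$.

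\textbf{Case 1: some greedy cut falls exactly at the boundary between $a$ and the first $c$.} The greedy state at a cut depends only on the remaining suffix, since the type is fixed. The suffix $c^kd$ is then processed as $c$ followed by $c^{k-1}d$. If the processing of the first copy of $c$ ends with a cut exactly at the end of that $c$, then it repeats identically for each subsequent copy. Let $y$ be the list of generations produced on one copy of $c$, $x$ the generations produced on $a$, and $z$ those produced on $d$. This gives $g=xy^kz$ with $\psi(x)=a$, $\psi(y)=c$, $\psi(z)=d$, which is form 1.

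\textbf{Case 2: the cuts do not align with the copies of $c$.} The cut positions taken modulo $|c|$ inside the periodic region $c^k$ form a deterministic sequence, because the greedy choice depends only on a bounded lookahead within $c^k$ until one reaches $d$. Since $k\ge 3$, I will argue that after the first cut inside the first or second copy of $c$, the offset repeats with period one copy of $c$. Concretely, a greedy block never spans more than $n-1$ letters and never contains two letters with equal $\beta$, so a block cannot swallow a whole copy of $c$ unless $c$ itself satisfies the increasing-parity condition. In that case the cuts land at copy boundaries, and we are back in form 1 or obtain it after shifting. Let $t$ be the generations up to the first cut inside the first copy of $c$, so $\psi(t)\subseteq ac$. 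Let $u$ be the generations between consecutive cuts at the same offset, so $\psi(u)$ is a cyclic rotation of $c$ and hence $\psi(u)\subset cc$, with $u\neq[]$ because $c\neq1$. This repeats $k-1$ times. The remainder $v$ satisfies $\psi(v)\subseteq cd$. This gives form 2.

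The main obstacle is justifying that the greedy segmentation becomes periodic after at most one copy of $c$, so that exactly $k-1$ repetitions fit. The greedy block ending inside one copy could depend on letters in the next copy, or on $d$ near the end. I would handle this by noting that a block ending depends only on the letters up to the first violation of the increasing or parity condition, and that within $c^k$ this look-ahead is identical for every copy except possibly the last. That is why only $k-1$ copies of $u$ are guaranteed, and why the leftover overlap is absorbed into $v\subseteq cd$.
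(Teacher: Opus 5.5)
There is a genuine gap. You use the hypothesis only to know that $\gamma(w)$ is a compact SCA representation. Everything afterwards concerns how the greedy $\gamma$ algorithm cuts the word $ac^kd$, which is defined for every word, compactly SCA-representable or not, and the periodicity you claim for that cutting is false in general.

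First, a correction to your model of $\gamma$. The generation types in $\gamma(w)$ alternate, because a compact type A generation is always followed by a type B one; the wording of step 5(b) is a slip, as the halting proof shows. So an all-straight generation $E$ is inserted whenever two consecutive greedy blocks have the same parity. The state at a cut is therefore the remaining suffix together with the current type, not the suffix alone. Nonempty blocks end exactly where two adjacent letters fail to have equal parity and strictly increasing $\beta$, so the cuts are determined locally. If $c$ has such a cut in its interior, your periodicity claim does hold and yields form 2.

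The failure is in the case you dismiss with ``we are back in form 1 or obtain it after shifting''. Take $n\geq 6$, $a=\sigma_1$, $c=\sigma_3$, $d=\sigma_5$. Then
\[
\gamma(w)=[C',E,C,E,\ldots,C,E,C''],\qquad \psi(C')=\sigma_1\sigma_3,\quad \psi(C)=\sigma_3,\quad \psi(C'')=\sigma_3\sigma_5,
\]
with exactly $k-2$ generations equal to $C$. A repeated unit $y$ with $\psi(y)=c$, or $u$ with $\psi(u)\subset cc$, contains at most one $C$ and no $C'$ or $C''$. It also cannot consist of $E$'s alone, since no two $E$'s are adjacent. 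So neither $y^k$ nor $u^{k-1}$ fits, and this segmentation gives neither form.

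The missing idea, which is the core of the paper's proof, is that an SCA pattern is deterministic. Under its fixed turning and crossing rules each generation determines the next, so once a generation recurs, the pattern is periodic from there to the end. The paper uses the $\gamma$ construction only to show that the generation opening one $c$-segment equals the generation opening the next. It then propagates this equality by determinism.

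In the example above, the same generation $E$ is followed once by $C$ and later by $C''\neq C$, which cannot happen in an SCA pattern. This is exactly where $k\geq 3$ is needed. It shows such $w$ are not compactly SCA-representable, so the lemma is safe, but your argument does not handle this case. You need to add the determinism step.

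A smaller point concerns your Case 1. There the last copy of $c$ can be merged with the start of $d$, and the first copy can lack the leading $E$ that later copies carry. So in that case you generally obtain form 2, not form 1.
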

\begin{proof}
    By \thref{idealRep}, $g = \gamma(w)$ is a compact SCA representation of $w$. There are a few cases to consider:
    \begin{description}
        \item Case 1: There exists $x'\subseteq g$ such that $\psi(x') = a$, $x' = [\delta_1,\delta_2,\cdots,\delta_m]$ and there exists $y'\subseteq g, y' = [\delta_{m+1},\cdots,\delta_r]$ such that $\psi(y') = c$.
        \item Case 2: There exists a $t'\subseteq g$ such that $\psi(t') = a$, $t' = [\delta_1,\delta_2,\cdots,\delta_m]$ and there exists $u'\subseteq g, u' = [\delta_{m+1},\cdots,\delta_r]$ such that $\psi(u') \subset cc$ and $\psi([\delta_{m+1},\cdots,\delta_{r - 1}])\subset c$.
        \item Case 3: There exists a $t'\subseteq g, t = [\delta_1,\cdots,\delta_m]$ such that $\psi(t) = ac$ but the conditions for case 1 are not met.
        \item Case 4: There exists $t\subseteq g$ such that $t = [\delta_1,\cdots,\delta_m], \varphi(t) \subseteq ac, \psi([\delta_1,\cdots,\delta_{m-1}])\subset a$ and $u\subseteq g, u = [\delta_{m+1},\cdots,\delta_r]$ such that $\psi([\delta_{m+1},\cdots,\delta_{r-1}])\subset c$ and $c\subset\psi([\delta_{m+1},\cdots,\delta_r])\subseteq cc$.
    \end{description}
    We will prove that in case 1, (1) holds, and in cases 2-4, (2) holds.

    \hfill\break
    \textbf{Case 1:}
    
    Choose $x$ such that $x = [\delta_1,\cdots,\delta_m]$ and $a\subseteq\psi([\delta_1,\cdots,\delta_{m+1}])\subseteq ac, \psi(x) = a$, and $m$ is maximal. Such an $x$ is guaranteed to exist by the existence of $x'$. Choose
    $y = [\delta_{m+1},\cdots, \delta_r]$ such that $\psi(y) = c$, $c\subset\psi([\delta_{m+1},\cdots,\delta_{r+1}])$, and $r$ is maximal. Such a $y$ must exist because $\varphi(y') = c$ and $k\geq 3$. 
    
    We aim to show $\delta_{m+1} = \delta_{r+1}$. Note that $\delta_{r+1}$ must contain a crossing, or else we would reach a contradiction to the maximality of $r$. Because $\psi(\delta_{m+1}), \psi(\delta_{r+1})$ are both prefixes of $c$, $\delta_{m+1}$ and $\delta_{r+1}$ must both be type A (type B) generations. Thus, $\delta_{m+1} = C_{m+1, 0}\cdots C_{m+1, k}, \delta_{r+1} = C_{r+1, 0}\cdots C_{r+1, k}$ for some $k\in\mathbb{N}^+$. Let $i$ be the least $i$ such that $\psi(C_{m+1, i})\neq 1$.
    Notice that $\psi(\delta_{m+1})$ is a prefix of $c$ by the maximality of $m$. Then, because $\delta_{r + 1}$ contains a crossing and $\psi(\delta_{r+1})$ is a prefix of $c$, $\delta_{m+1}$ must contain the same crossing, so $C_{m+1, i} = C_{r + 1, i}$. Thus, by the definition of the $\gamma$ algorithm, $C_{m + 1, j} = C_{r + 1, j}, i\leq j\leq k$, so $\delta_{m+1} = \delta_{r + 1}$ (if $C_{m+1,j}\neq C_{r+1,j}$ for some $j<i$, we would reach a contradiction, as one such cell would have to be a crossing and the other would not contain a crossing by choice of $i$).
    
    By the fact that $g$ is an SCA pattern,  the fact that $\delta_{m+1} = \delta_{r + 1}$, and the fact that $c^k$ is a subword of $w$, $xy^k$ is a subword of $g$. Then, $g = xy^kz$ where $z = [\delta_{m + k(r-m)},\cdots,\delta_{\length(g)}]$ by choice of $y$. Because $\psi(x) = a, \psi(y) = c$ and $\psi(g) = ac^kd$, this forces $\psi(z) = d$. 
    
    \hfill\break
    \textbf{Case 2:}
    
     Choose $t''$ with $t'' = [\delta_1,\cdots,\delta_m]$ such that $\varphi(t'') = a$ and $m$ is the maximal such $m$ for which this is possible. The existence of $t''$ follows from the existence of $t'$, similarly to the analogous argument in case 1. Then, there exists a $u'' = [\delta_{m+1},\cdots,\delta_r]$ such that $c\subset\psi(u'')\subset cc$, $\psi([\delta_{m+1},\cdots,\delta_{r-1}])\subset c$, and $r$ is minimal. This follows from the fact that $\psi(g) = ac^kd$ for some $k\geq 3$ and the existence of $u'$. Let $t = [\delta_1,\cdots,\delta_{m+1}], u = [\delta_{m+2},\cdots,\delta_r]$. 
     
     We aim to show that $\delta_{m+1} = \delta_{r}$. The proof will be similar to the proof of the analogous fact given in case 1. Let $\delta_{m+1} = C_{m+1, 0}\cdots C_{m+1, k}$ and let $\delta_r = C_{r,0}\cdots C_{r,w}$. Let $i$ be the least $i$ such that $\psi(C_{m+1, i})\neq 1$ and $\psi(C_{m+1, i})\cdots\psi(C_{m+1, k})$ is a prefix of $c$. Let $j$ be the least $j$ such that $\psi(C_{r,j})\neq 1$ and $\psi(C_{r,j}\cdots C_{r,w})$ is a prefix of $c$. Then, by the construction of $\gamma$ algorithm, $i = j$, $w = k$, and $C_{m+1,l} = C_{r,l}$ for all $i\leq l\leq k$. By the $\gamma$ algorithm, this implies $C_{m+1,l} = C_{r,l}$ for $1\leq l\leq i$, so $\delta_r = \delta_{m+1}$. Then, by the fact that $g$ is an SCA pattern, $\delta_{r+1} = \delta_{m+2}$.

     By choice of $u$ and the fact that $k\geq 3, \psi(u^{k-1})\subseteq c^k$. Define 
     \begin{gather*}
         v = [\delta_{(k-1)(r-(m+2))+m+1},\cdots,\delta_{\length(g)}].
     \end{gather*}
    Then, $\psi(v)\subseteq cd$ and $g = tu^{k-1}v$.
     
     \hfill\break
     \textbf{Case 3:}

     Let $t''=[\delta_1,\cdots,\delta_m]$ with $m$ minimal such that  $\psi(t) = ac$. This is possible by choice of $t'$, similarly to cases 1 and 2. Let $\delta_q = C_{q,0}\cdots C_{q,w_q}$ for all $q\in[m]$. Let $l>m$ be the least $l$ such that $\delta_l\neq 1$, and let $r\geq l$ be the greatest $r$ such that $\psi([\delta_l,\cdots,\delta_r])\subseteq c\subset\psi([\delta_l,\cdots,\delta_{r+1}])\subseteq cc$. We claim that $\psi([\delta_l,\cdots,\delta_r])= c$. For contradiction, suppose otherwise. Then, there exists a $j$ such that $\varphi(C_{r+1,j})$ is the first symbol of $c$ and for some $i<j$, $C_{r+1,i}$ contains a crossing, $\psi([\delta_l,\cdots,\delta_r])\psi(C_{r+1,1})\cdots\psi(C_{r+1,i})\subseteq c$, and $c\subset \psi([\delta_m,\cdots,\delta_r])\psi(C_{r+1,1})\cdots\psi(C_{r+1,j})$. Notice then that if $\delta_{r+1}$ is type A (type B), $\delta_{l}$ is type A (type B) and $\delta_m$ is type A (type B). Thus, $n\geq 4$ and $\psi(\delta_{l+1}) = 1$, so all turning rules of $g$ have the rule $\frac{S}{SS}$. This is a contradiction, as $C_{r+1,i}$ cannot contain a crossing.

     Let $t = [\delta_1,\cdots, \delta_{m-1}],u = [\delta_m,\cdots, \delta_r]$. Then, let $v= [\delta_{(k-1)(r-m+1)},\cdots,\delta_{\length(g)}]$, so $g = xy^{k-1}z$ where $\varphi(x) = ac,\varphi(y) = c$, so $\varphi(z)$ must be $d$.

    \hfill\break
    \textbf{Case 4:}
    
    We aim to show $\delta_{m+1} = \delta_{r+1}$. Let $\delta_m = C_{m, 0}\cdots C_{m,k}$ for some $k$. Then, let $i$ be the least $i$ such that $\psi(C_{m, i})\neq 1$ and $\psi(C_{m, i})\cdots\psi(C_{m, k})\subseteq c$, similarly to the proof of the analogous fact in case 1. By choice of $\delta_m$, $\psi(C_{m, i})\cdots\psi(C_{m, k})$ is a prefix of $c$. Then, by the definition of compact, the $\gamma$ algorithm, and the fact that $\psi([\delta_{m+1},\cdots,\delta_{r-1}])\subseteq c$ and $c\subset\psi([\delta_{m+1},\cdots,\delta_r])\subseteq cc$, $C_{m, i} = C_{r, i}$. By the fact that $g$ is a compact SCA pattern and the definition of the $\gamma$ algorithm, $C_{m, j} = C_{r, j}$ for $i\leq j\leq k$. Thus, by the fact that $g$ is an SCA pattern, $\delta_{m+1} = \delta_{r+1}$. Recall that $\psi(u)^{k-1}\subseteq c^k$ but $\psi(u)^{k}\not\subseteq c^k$. Thus, $g = tu^{k-1}v$ where $v = [\delta_{(k-1)(r-m)+m},\cdots,\delta_{\length(g)}]$. Note that $\psi(v)\subseteq cd$. 
\end{proof}
We now prove a theorem which will allow us to determine if several different braids are compactly SCA representable by determining if a single word is compactly SCA representable:
\begin{theorem}
    Let $b\in B_n, n\geq 3$ such that $b$ is compactly SCA representable. If $b$ has an equivalence class representative $w = ac^kd, c\neq1$ for some $k\geq 3$, then one of the following holds:
    \begin{itemize}
        \item[1.] All braids with equivalence class representatives of the form $ac^jd, j\geq 2$ are compactly SCA representable
        \item[2.] There exist $t\subset ac$, $u\subset cc$, and $v\subset cd$ such that $w = tu^{k-1}v$ and all braids with equivalence class representatives of the form $tu^jv, j\geq 2$ are compactly SCA representable.
    \end{itemize}
\end{theorem}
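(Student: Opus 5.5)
The plan is to reduce the statement about braids to the statement about words, and then apply \thref{formOfReps} followed by the pumping lemma \thref{trcrtransfer}.

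\textbf{Step 1: pass from the braid to the word.} Since $b$ is compactly SCA representable, there is a compact $g_0\in S_n^*$ with $\psi(g_0)\in b$. The theorem, however, is phrased in terms of the specific representative $w=ac^kd$. I would first argue that we may take the word itself to be compactly SCA-representable, i.e.\ that there is a compact SCA pattern $g$ with $\psi(g)=w$. If this cannot be arranged, the statement should be read with $w:=\psi(g_0)$ assumed to have the form $ac^kd$. In either reading, \thref{idealRep} then says that $\gamma(w)$ is a compact SCA pattern with $\psi(\gamma(w))=w$.

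\textbf{Step 2: apply \thref{formOfReps}.} Applied to $w=ac^kd$ with $k\geq 3$ and $c\neq 1$, the lemma gives a compact SCA representation $g$ of one of two forms:
\begin{itemize}
\item[(i)] $g=xy^kz$ with $\psi(x)=a$, $\psi(y)=c$, $\psi(z)=d$;
\item[(ii)] $g=t'u'^{k-1}v'$ with $u'\neq[]$, $\psi(t')\subseteq ac$, $\psi(u')\subset cc$, $\psi(v')\subseteq cd$.
\end{itemize}

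\textbf{Step 3, case (i).} Since $k\geq 2$, \thref{trcrtransfer} shows that $xy^jz$ is an SCA pattern for every $j\geq 2$. It is still compact, because each generation is a generation of $g$. Moreover $\psi(xy^jz)=ac^jd$ by the homomorphism property of $\psi$ established before \thref{PartialMonoidMorphism}. Hence every braid $[ac^jd]$ with $j\geq 2$ has a compact SCA representation, which is conclusion 1.

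\textbf{Step 3, case (ii).} Set $t=\psi(t')$, $u=\psi(u')$ and $v=\psi(v')$. Then $w=\psi(g)=tu^{k-1}v$. Because $k-1\geq 2$, \thref{trcrtransfer} again shows that $t'u'^jv'$ is a compact SCA pattern for all $j\geq 2$, and it has image $tu^jv$. So every braid with representative $tu^jv$, $j\geq 2$, is compactly SCA representable, which is conclusion 2.

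\textbf{Main obstacles.} The lemma only gives the non-strict containments $\psi(t')\subseteq ac$ and $\psi(v')\subseteq cd$, whereas the theorem asks for strict ones. I would get strictness from the fact that case (ii) arises in \thref{formOfReps} exactly when case 1 fails. If $\psi(t')=ac$ and $\psi(v')=cd$ both held, then $u^{k-1}=c^{k-2}$ as words, which forces a period clash unless $u=c$. That would put us back in case (i), after absorbing one copy of $u'$ into $t'$. I also expect to check that the compactness and block types of the generations are preserved when $u'$ is repeated; this should follow since $u'$ starts and ends at matching generation types, as shown in the proof of \thref{formOfReps}. The genuinely delicate point is Step 1, namely passing from "the braid $b$ is compactly representable" to "the particular word $w$ is". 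I would handle it by stating and using the word-level hypothesis explicitly.
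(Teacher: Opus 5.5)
Your main line is the paper's proof. You apply \thref{formOfReps} to $w$, then pump the repeated block with \thref{trcrtransfer}, using $k\geq 2$ in the first case and $k-1\geq 2$ in the second. Compactness carries over generation by generation, and $\psi$ is multiplicative. You are also right that \thref{formOfReps} needs the \emph{word} $w$ itself to be compactly SCA-representable. The paper applies the lemma to $w$ directly, so it relies on that same word-level hypothesis implicitly; stating it explicitly, as you propose, is the right fix.

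The step that does not work is your upgrade of $\psi(t')\subseteq ac$ and $\psi(v')\subseteq cd$ to strict containments. There are two problems.
\begin{itemize}
\item You only treat the case where both are equalities, but strictness already fails if just one is. In fact, Case 3 in the paper's proof of \thref{formOfReps} ends with exactly $\psi(t')=ac$, $\psi(u')=c$, $\psi(v')=d$.
\item In the case you do treat, $u^{k-1}=c^{k-2}$ can never hold with $u=c$, since that gives $c^{k-1}=c^{k-2}$, i.e.\ $c=1$. It does hold with $u\neq c$: in general $u=p^{(k-2)m}$ and $c=p^{(k-1)m}$, for instance $c=u^2$ when $k=3$. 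So there is no period clash and no route back to case (i) by absorbing a copy of $u'$ into $t'$.
\end{itemize}
The paper does not prove strictness either; its Case 2 just carries over the containments given by \thref{formOfReps}. You should drop this step and state what the argument actually yields: conclusion 2 with $t\subseteq ac$, $u\subset cc$, $v\subseteq cd$.
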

\begin{proof}
    Apply \thref{formOfReps} to $w$ to get a compact SCA representative $g$ in one of the desired forms. We consider two cases:
    
    \hfill\break
    \textbf{Case 1:} $g = xy^kz$ for $\psi(x) = a, \psi(y) = c$ and $\psi(z) = d$

    We will show that (1) is satisified. Let $j\geq 2$ and let $h = xy^jz$. Notice that $h$ is compact. By \thref{trcrtransfer}, $h$ is an SCA pattern. Now, note that $\psi(h) = \psi(x)\psi(y)^j\psi(z) = ac^jd$. Thus, every word of the form $ac^jd, j\geq 1$ is compactly SCA-representable, so all braids $q\in B_n$ such that $q$ contains a word of the form $ac^jd, j\geq 2$ are compactly SCA representable. 

    \hfill\break
    \textbf{Case 2:}  $g = xy^{k-1}z$ for $\psi(x) = t \subseteq ac, \psi(y) = u \subseteq cc$ and $\psi(z) = v \subset cd$
    
    We will show that (2) is satisfied. Let $j\geq 2$ and let $h = xy^jz$.  Notice that $h$ is compact and $\psi(h) = tu^jv$. By \thref{trcrtransfer}, $h$ is an SCA pattern. Thus, all braids $q\in B_n$ such that $q$ contains a word of the form $tu^jv, j\geq 2$ are compactly SCA representable.
\end{proof}
\begin{corollary}
    The problem of whether a word $w\in W_n$ has a compact SCA representation is decidable.
\end{corollary}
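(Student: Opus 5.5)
The proof will be a direct reduction to \thref{idealRep}: for any $w\in W_n$, $w$ is compactly SCA-representable if and only if $\gamma(w)$ is an SCA pattern. The decision procedure therefore has three steps.

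On input $w\in W_n$, we first test whether $w=1$. If it is, we accept, since $\gamma(1)=[]$ is trivially a compact SCA pattern with $\psi([])=1$.

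Otherwise, we run the $\gamma$ algorithm on $w$ to produce $\gamma(w)\in G_n^*$. By \thref{halts}, this computation halts on every input. Each call to the type A or type B algorithm only inspects finitely many letters of $w$ and writes down finitely many cells, so $\gamma(w)$ is an explicitly computed finite grid pattern. Its height is at most the length of $w$, because every generation produced consumes at least one letter.

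Finally, we run the machine $M$ from \thref{sn} on $\gamma(w)$, accepting if $M$ accepts and rejecting otherwise. Since $M$ is a decider for $S_n^*$ inside $G_n^*$, this step also always halts.

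For correctness, suppose the procedure accepts. Then $\gamma(w)$ is an SCA pattern. By the remark following \thref{halts}, it is compact with $\psi(\gamma(w))=w$, so it witnesses that $w$ is compactly SCA-representable. Conversely, if $w$ is compactly SCA-representable, the forward direction of \thref{idealRep} says $\gamma(w)$ is an SCA pattern, so $M$ accepts.

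The step needing the most care is the appeal to \thref{idealRep}, which carries all the content. The corollary is stated for arbitrary $n$, while \thref{irbm} assumes $n\geq 3$, so I will point out that \thref{idealRep} separately covers $w=1$ and the case $n=2$. The case $n=1$ is trivial, since $W_1$ contains only the empty word. No other obstacle is expected, because the argument never uses braid equivalence: it concerns the word $w$ itself, so Garside's algorithm is not needed.
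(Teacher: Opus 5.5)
Your proposal is correct and is essentially the paper's proof. Both compute $\gamma(w)$, which terminates by \thref{halts}, and decide whether it is an SCA pattern, with all the content carried by \thref{idealRep}. The only difference is cosmetic: the paper skips the continuity check because $\gamma(w)$ is continuous by construction, and runs just the crossing rule and turning rule algorithms, whereas you run the full decider $M$ from \thref{sn}.

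One aside is inaccurate, though you never use it. A single type A or type B step can consume no letters: the type B algorithm explicitly allows $i=0$, as in the middle generation of $\gamma(\sigma_1\sigma_1)$ in $W_3$. So only each pair of consecutive steps is guaranteed to shorten the word, and the height of $\gamma(w)$ can exceed the length of $w$. You only need termination, and \thref{halts} already gives it.
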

\begin{proof}
    Compute $\gamma(w)$, and determine whether $\gamma(w)$ has a turning rule and a crossing rule. If $\gamma(w)$ has both a turning rule and a crossing rule, accept; otherwise reject.

    We now show that the above algorithm works. Notice immediately that by the definition of $\gamma$, $\gamma(w)$ is continuous. By \thref{idealRep}, $w$ has a compact SCA representation if and only if $\gamma(w)$ has a crossing and turning rule, so if the algorithm terminates in finite time, it decides the correct set. Notice that computing $\gamma(w)$ can be done in finite time by \thref{halts}, and notice that by Algorithm 2 of section \ref{fingridrep} and \cite[Turning Rule Algorithm]{gliders}, whether or not a finite grid pattern has a turning rule and a crossing rule is decidable. Thus, the claim is proved.
\end{proof}
In the last few results, we have assumed $n\geq 3$. We will justify this assumption by proving that all $b\in B_2$ are compactly SCA-representable.
\begin{proposition}
    Let $b\in B_2$. Then, $b$ is compactly SCA representable.
\end{proposition}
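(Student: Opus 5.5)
Since $B_2\cong\mathbb{Z}$ is generated by $\sigma_1$, every $b\in B_2$ equals $[\sigma_1^k]$ for a unique $k\in\mathbb{Z}$. The plan is to write down, for each $k$, an explicit compact SCA pattern whose $\psi$-image is $\sigma_1^k$. This pattern will simply be $\gamma(\sigma_1^k)$ (with $\sigma_1^{k}$ read as $(\sigma_1^{-1})^{|k|}$ when $k<0$). If $k=0$, the empty pattern $[]$ is trivially a compact SCA representation of the identity, so we may assume $k\neq 0$.

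For $k\neq0$, $\beta(\sigma_1^{\pm1})=1$ is odd, so the $\gamma$ algorithm runs type A steps only. Each type A step consumes exactly one letter, because the strict increase condition $\beta(x_j)<\beta(x_k)$ fails for two consecutive copies of $\sigma_1^{\pm1}$. Hence $\gamma(\sigma_1^k)=[\delta,\delta,\dots,\delta]$ with $|k|$ copies of the single-cell generation $\delta=[r_1^{(1)},l_2^{(2)}]$ when $k>0$, or $\delta=\overline{[r_1^{(1)},l_2^{(2)}]}$ when $k<0$. This pattern is compact, since the two strands always share one cell, and by the Remark following \thref{halts} we have $\psi(\gamma(\sigma_1^k))=\sigma_1^k$. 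It remains to check that it is an SCA pattern.

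By \thref{trcrtransfer} with $a=c=[]$, it suffices to check that $[\delta,\delta]$ is an SCA pattern; the case $|k|=1$ is then a sublist, covered by \thref{rem}. For continuity, the only nonempty parent pairs in each generation are (empty, $\delta$) and ($\delta$, empty). Their children are again the cell containing both strands, crossing, and empty cells elsewhere, which fits Algorithm 1: the configurations land in $C_0$ with value $F$ and in $C_3$ with value $N$. For the turning rule, the pattern only fixes the bits for a crossing cell adjacent to an empty cell turning back into a full cell. A turning rule with those bits exists, and the remaining bits are free. For the crossing rule, only one crossing type ever appears as a child, so in Algorithm 2 no set $B_i$ can contain both $T_1$ and $T_2$. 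A generic crossing rule therefore exists, and it is all $1$'s or all $0$'s on the relevant bits according to the sign of $k$.

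The main obstacle is the bookkeeping in that verification. Each cell here contains both strands, so the relevant parent configuration is a crossing cell next to an empty cell whose child must again contain two crossing strands. I must confirm that the turning-rule notation of \cite{gliders} allows this, namely that the two strands of $[r_1^{(1)},l_2^{(2)}]$ pass into one cell of the next generation, which is shifted by the half-offset. If the offset forces the strands to alternate between being split and being together, I would instead use the alternating pattern $[\delta, [n_0^{(\varnothing)},s_1^{(1)}][s_2^{(2)},n_3^{(\varnothing)}],\delta,\dots]$. That pattern is not compact in the strict sense, so the preferred route is the single-cell generation above, whose compactness follows directly from the definition.
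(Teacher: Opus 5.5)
Your construction fails at the continuity check. Consecutive generations of the SCA grid are offset by half a cell. The child of two adjacent cells receives only the right exit of the left parent and the left exit of the right parent, so a two-strand child needs both parents to send a strand into it; this is the set $C_0$ of Algorithm 1. In the one-cell generation $\delta=[r_1^{(1)},l_2^{(2)}]$, the flanking pairs are (empty, $\delta$) and ($\delta$, empty). These are of type $(N,F)$ and $(F,N)$, so they fall in $C_2$ and $C_1$, not in $C_0$ as you assert. Their children must therefore be one-strand cells, from $\{L,S_2\}$ and $\{R,S_1\}$ respectively. Hence the generation after $\delta$ always consists of two one-strand cells. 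Algorithm 1 rejects $[\delta,\delta]$, and $[\delta,\dots,\delta]$ is not a continuous grid pattern, let alone an SCA pattern; no choice of turning notation rescues it.

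Your trace of $\gamma$ is also off. Step 5(b) is meant to alternate type A and type B steps, which is how \thref{halts} reads it. For $n=2$ a type B step consumes no letter and outputs $\varepsilon=[n_0^{(\varnothing)},s_1^{(1)}][s_2^{(2)},n_3^{(\varnothing)}]$. So $\gamma(\sigma_1^m)=[\delta,\varepsilon,\delta,\dots,\varepsilon,\delta]$, of height $2m-1$.

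The alternation you were worried about is exactly what happens, and the fallback you discarded is the correct pattern: it is compact. Compactness is measured in strand positions (the subscripts), not in cells. In $\varepsilon$ the strands sit at positions $1$ and $2$, a difference of $1=n-1$, and the paper explicitly allows such type B generations in compact patterns.

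The paper's proof uses precisely $g=[\delta_1,\dots,\delta_{2m-1}]$, with $\delta_i=[r_1^{(1)},l_2^{(2)}]$ (resp.\ $\overline{[r_1^{(1)},l_2^{(2)}]}$ for $\sigma_1^{-1}$) in odd generations and $\delta_i=\varepsilon$ in even ones. It records the generic turning rule $1XXX00X0X$ and the generic crossing rule $XXXX1XXXX$ (resp.\ $XXXX0XXXX$). The only constrained crossing bit is $B_4$, coming from the pair $([n_0^{(\varnothing)},s_1^{(1)}],[s_2^{(2)},n_3^{(\varnothing)}])$ whose child is the crossing.

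With that pattern substituted, the rest of your outline survives: the reduction to $\sigma_1^{\pm m}$, the sign-determined crossing bit, and your use of \thref{trcrtransfer}. The lemma now applies with empty prefix, repeated block $[\delta,\varepsilon]$ and suffix $[\delta]$, and the cases $m\le 2$ are handled as sublists via \thref{rem}.
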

\begin{proof}
     Let $w\in W_2$ be such that $w\in b$. Apply the relations $\sigma_1\sigma_1^{-1} = 1, \sigma_1^{-1}\sigma_1 = 1$ and $x1 = 1x = x, x\in\{\sigma_1, \sigma_1^{-1}\}$ to $w$ until the result of the applications, $w'$, is $w' = x_1x_2\cdots x_m$ where $m\in\mathbb{N}^+$ and $x_i = x_j$ for all $1\leq i, j \leq m$. Then, we have three cases:
    \begin{itemize}
        \item Case 1: $x_i = 1$ for all $i\in[m]$
        \item Case 2: $x_i = \sigma_1$ for all $i\in [m]$
        \item Case 3: $x_i = \sigma_1^{-1}$ for all $i\in[m]$
    \end{itemize}
    In the first case, let $g = []$. 
    
    In the second case, let $g = [\delta_1,\delta_2,\cdots,\delta_{2m-1}]$ where $\delta_i = C_{i,1} = [r_1^{(1)}, l_2^{(2)}]$ for $i$ odd and $\delta_i = C_{i, 1}C_{i, 2} = [n_0^{(\varnothing)}, s_1^{(1)}][s_2^{(2)}, n_3^{(\varnothing)}]$ for $i$ even. Then, $g$ is compact by definition, the generic turning rule of $g$ is $1XXX00X0X$, the generic crossing rule of $g$ is $XXXX1XXXX$, and $\varphi(g) = [(\sigma_1)^m] = [w'] =b$. 
    
    In the third case, let $g = [\delta_1,\delta_2,\cdots,\delta_{2m-1}]$ where $\delta_i = C_{i,1} = \overline{[r_1^{(1)}, l_2^{(2)}]}$ for $i$ odd and $\delta_i = C_{i, 1}C_{i, 2} = [n_0^{(\varnothing)}, s_1^{(1)}][s_2^{(2)}, n_3^{(\varnothing)}]$ for $i$ even. Then, $g$ is compact by definition, the generic turning rule of $g$ is $1XXX00X0X$, the generic crossing rule of $g$ is $XXXX0XXXX$, and $\varphi(g) = [(\sigma_1^{-1})^m] = [w'] = b$.
\end{proof}

\section{Outlook}\label{outlook}
There are several directions for future work. One could attempt to determine the decidability of the set of braids in $B_n$ with SCA representations. Alternatively, one could find some classification of the SCA-representable braids in $B_n$ for small $n>2$ purely in terms of purely topological properties. One could also devise some notion of infinite SCA representations, where a braid can have a SCA representation that has infinite length. This question would require many of the definitions and machinery that has been built to be generalized.
\section*{Acknowledgments}
Thanks to Dr.~Holden for advising this project and the Rose Research Fellows program for funding it during the 2023-24 school year.

\end{document}